\begin{document}

\newtheorem{theorem}{Theorem}[section]
\newtheorem{corollary}[theorem]{Corollary}
\newtheorem{definition}[theorem]{Definition}
\newtheorem{proposition}[theorem]{Proposition}
\newtheorem{lemma}[theorem]{Lemma}
\newtheorem{conjecture}[theorem]{Conjecture}
\newenvironment{proof}{\noindent {\bf Proof.}}{\rule{3mm}{3mm}\par\medskip}
\newcommand{\remark}{\medskip\par\noindent {\bf Remark.~~}}

\title{Incidence Matrices of  Finite Quadratic Spaces\footnote{This work is supported by the National Natural Science Foundation of China (No. 11071160).}}
\author{Chunlei Liu\footnote{Dept. of Math., Shanghai Jiaotong Univ., Shanghai,
200240, clliu@sjtu.edu.cn.}, Yan Liu\footnote{Corresponding author,
Dept. of Math., SJTU, Shanghai, 200240, liuyan0916@sjtu.edu.cn.}}

\maketitle
\thispagestyle{empty}

\abstract{In this paper, we first prove the 2-rank of full incidence
matrix of $PG(n,q)$ with $q$ an odd prime power. Then by the
quadratic form defined on $PG(n,q)$, the points of it are classified
as isotropic and anisotropic points. Under this classification,  we
divide the full incidence matrix into four sub-matrices. Then by
using the software package $Magma$, we give a general conjecture for
the 2-rank of sub-matrices of the full incidence matrix in $PG(n,q)$
and  prove it in the case of $n=1, 2$. }

\noindent {\bf Key words and phrases:} finite field, quadratic form, incidence matrix.

\section{\small{INTRODUCTION}}
Throughout this paper, $\mathbb{F}_{q}$ denotes the finite field of order $q$, where $q$  is an odd
prime power. And we refer projective geometry to finite projective geometry.
Projective geometry is formally defined as a collection of objects with incidence relation between them. We use $PG(n, q)$ to denote the $n$-dimensional classical projective geometry of order $q$. Due to a theorem of Veblen and Young \cite{16}, we can construct $PG(n, q)$  using a $(n+1)$-dimensional vector space $V$ over $\mathbb{F}_{q}$. In this model the $1$-dimensional subspaces of $V$ represent points, the $2$-dimensional subspaces represent lines, the $3$-dimensional subspaces represent planes,  and the $n$-dimensional subspaces represent hyperplanes. The incidence is given by the natural containment. The $(n+1)$-dimensional vector space $V$ is also called the underlying vector space of $PG(n, q)$. Given a basis of the underlying vector space $V$, we have $V \cong (\mathbb{F}_{q})^{n+1}$. So in the following, we do not distinguish between $V$ and $(\mathbb{F}_{q})^{n+1}$. Since scalar multiples of a nonzero vector generate the same 1-dimensional subspace, we often ``normalize" our vectors to ensure the uniqueness of the representation. This means that the first nonzero coordinate of every vector is 1.
We define a quadratic form $Q$ on $V$: $Q(X)=x_{0}^{2}-x_{1}^{2}+x_{2}^{2}+\cdots +(-1)^{n}\alpha x_{n}^{2}$, where $\alpha$ is a nonzero element of $\mathbb{F}_{q}$, for any $X \in V$. Then the associated bilinear form over $V$ is: $\langle X, Y \rangle=\frac{1}{2}\{Q(X+Y)-Q(X)-Q(Y)\}$, for any $X, Y \in V$. It is not hard to see that the quadratic form defined above is nondegenerate.

From the above illustration, we can use ``orthogonal complement" to represent hyperplanes. That is, the set of vectors orthogonal to a $n$-dimensional subspace forms a $1$-dimensional subspace. So there exists a bijection between the set of all hyperplanes and the set of all points of $PG(n, q)$, moreover, each hyperplane can also be represented by a nonzero vector. For example, let $P$ be a point in $PG(n,q)$, then $P^{\perp}$ denote the corresponding hyperplane which is orthogonal to $P$. And a point $R$ is on the hyperplane $P^{\perp}$ if and only if $\langle P,~R\rangle=0$. Moreover, because the bilinear form is symmetric, $R \in P^{\perp} \Longleftrightarrow P \in R^{\perp}$.

Let $A$ be the $(q^{n}+q^{n-1}+\cdots+q+1)\times(q^{n}+q^{n-1}+\cdots+q+1)$ incidence matrix of $PG(n,q)$, the rows of which are labeled by the points in $PG(n,q)$ and the columns are labeled by the corresponding hyperplanes. Whenever a point lies on a hyperplane, the element of the matrix at the intersection of those two labels is a 1, and otherwise, it is a 0.
The 2-rank of $A$ can be obtained by the following proposition:
\begin{proposition}\label{Prop:1.1}
The $2$-rank of $\mathbf{A}$ is $q^{n}+q^{n-1}+\cdots+q+1$ or $q^{n}+q^{n-1}+\cdots+q$ according as $n$ is odd or even.
\end{proposition}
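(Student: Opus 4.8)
The plan is to sidestep a direct computation of $\mathrm{rank}_{2}(\mathbf{A})$ and instead analyze the Gram-type product $\mathbf{A}\mathbf{A}^{T}$ over $\mathbb{Z}$ and then reduce it modulo $2$. First I would identify the entries of $\mathbf{A}\mathbf{A}^{T}$: since the column indexed by $R$ corresponds to the hyperplane $R^{\perp}$, the $(P,P')$ entry of $\mathbf{A}\mathbf{A}^{T}$ counts the points $R$ with $R\in P^{\perp}\cap P'^{\perp}$, i.e. the number of hyperplanes through both $P$ and $P'$. Standard counting in $PG(n,q)$ gives $\theta_{n-1}:=q^{n-1}+\cdots+q+1$ hyperplanes through one point and $\theta_{n-2}:=q^{n-2}+\cdots+q+1$ hyperplanes through two distinct points (those containing the line they span; here one uses that $P\mapsto P^{\perp}$ is a bijection, so distinct points give distinct hyperplanes). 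Writing $v:=q^{n}+\cdots+q+1$, $I$ for the $v\times v$ identity and $J$ for the all-ones matrix, this yields the exact integer identity $\mathbf{A}\mathbf{A}^{T}=q^{n-1}I+\theta_{n-2}J$.

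Next I would reduce mod $2$. Because $q$ is odd, every $q^{k}$ is odd, hence $\theta_{m}\equiv m+1\pmod 2$; in particular $q^{n-1}\equiv 1$, $\theta_{n-2}\equiv n-1$, $\theta_{n-1}\equiv n$, and $v\equiv n+1\pmod 2$. Thus over $\mathbb{F}_{2}$ we get $\mathbf{A}\mathbf{A}^{T}\equiv I$ if $n$ is odd and $\mathbf{A}\mathbf{A}^{T}\equiv I+J$ if $n$ is even. In the odd case $\mathbf{A}\mathbf{A}^{T}=I$ has $\mathbb{F}_{2}$-rank $v$, which forces $\mathrm{rank}_{2}(\mathbf{A})=v=q^{n}+\cdots+q+1$ (the maximal possible value), finishing that case.

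In the even case $v$ is odd, and computing with the map $x\mapsto x+(\mathbf{1}^{T}x)\mathbf{1}$ one finds that $I+J$ over $\mathbb{F}_{2}$ has kernel exactly $\{0,\mathbf{1}\}$, hence $\mathbb{F}_{2}$-rank $v-1$; this gives the lower bound $\mathrm{rank}_{2}(\mathbf{A})\ge\mathrm{rank}_{2}(\mathbf{A}\mathbf{A}^{T})=v-1$. The step I expect to need the most care is that $\mathbf{A}\mathbf{A}^{T}$ only furnishes a lower bound for $\mathrm{rank}_{2}(\mathbf{A})$, so the matching upper bound must be supplied separately: I would note that $\mathbf{A}\mathbf{1}$ has every coordinate equal to $\theta_{n-1}$, which is even when $n$ is even, so $\mathbf{1}\in\ker\mathbf{A}$ over $\mathbb{F}_{2}$ and $\mathrm{rank}_{2}(\mathbf{A})\le v-1$. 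Combining the two bounds gives $\mathrm{rank}_{2}(\mathbf{A})=v-1=q^{n}+\cdots+q$, as claimed. (The symmetry $\mathbf{A}^{T}=\mathbf{A}$, which follows from $\langle P,R\rangle=0\iff\langle R,P\rangle=0$, is a convenient observation but is not actually required for the argument above.)
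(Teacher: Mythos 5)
Your argument is correct and is essentially the paper's: the paper computes the same intersection number $\theta_{n-2}=q^{n-2}+\cdots+q+1$, observes that the points and hyperplanes of $PG(n,q)$ form a symmetric $2$-design, and then cites the standard $p$-rank theorem for symmetric designs, whose proof is exactly your computation $\mathbf{A}\mathbf{A}^{T}=q^{n-1}I+\theta_{n-2}J$ reduced modulo $2$. The only difference is that you prove that cited result inline (including the matching upper bound from $\mathbf{A}\mathbf{1}\equiv 0$ when $n$ is even), which makes your version self-contained.
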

\begin{proof}
The result is obviously when $n=1$. We only consider the cases when $n \geq 2$ in the following.
Suppose that $Y$, $Z$ are two distinct points on $PG(n,q)$, then the number of hyperplanes through them is equal to the number of points satisfying the following equations:
\[
  \left\{
   \begin{aligned}
   x_{0}y_{0}-x_{1}y_{1}+\cdots+(-1)^{n}\alpha x_{n}y_{n}&=0  \\
   x_{0}z_{0}-x_{1}z_{1}+\cdots+(-1)^{n}\alpha x_{n}z_{n}&=0 \\
   \end{aligned}
   \right.
\]
Easy calculation shows that the number of hyperplanes through 2 fixed points is equal to $q^{n-2}+q^{n-3}+\cdots+q+1$. One can immediately obtain a symmetric $2$-design $T$ from $PG(n,q)$, where the points of $T$ are the points of $PG(n,q)$ and the blocks of $T$ are the hyperplanes of $PG(n,q)$. Then the result can be obtained by \cite{14}(p.309).
\end{proof}

Next, we give a partition of the points of $PG(n,q)$ by the bilinear form defined above. Let $P = (x_{0}, x_{1}, \cdots,  x_{n})$ be a point in $PG(n,q)$, then $P$ is called an {\it isotropic, square anisotropic or non-square anisotropic} point if $\langle P, P \rangle = Q(P)$ is a zero, nonzero square or non-square element of $\mathbb{F}_{q}$. According to this partition, we divide $A$ into nine sub-matrices:
\begin{equation}
\left( \begin{array}{ccc}
\mathbf{\dot{A}}_{11}& \mathbf{\dot{A}}_{12}&\mathbf{\dot{A}}_{13}\\
\mathbf{\dot{A}}_{21}& \mathbf{\dot{A}}_{22}&\mathbf{\dot{A}}_{23}\\
\mathbf{\dot{A}}_{31}& \mathbf{\dot{A}}_{32}&\mathbf{\dot{A}}_{33}\\
\end{array} \right),
\end{equation}
where the columns of $\mathbf{\dot{A}}_{11}$, $\mathbf{\dot{A}}_{12}$ and $\mathbf{\dot{A}}_{13}$ are labeled by the square anisotropic, nonsquare anisotropic, isotropic points respectively, and the rows of $\mathbf{\dot{A}}_{11}$, $\mathbf{\dot{A}}_{21}$ and $\mathbf{\dot{A}}_{31}$ are labeled by their corresponding hyperplanes. When $n=2$, according to \cite{3,4,5,6} we have known the 2-rank of any one of the nine submatrix of $A$ under the quadratic form $Q'(X)=\alpha x_{1}^{2}-x_{0}x_{2}$ which is equivalent to $Q(X)=x_{0}^{2}-x_{1}^{2}+\alpha x_{2}^{2}$, where $\alpha$ is a nonzero square element of $\mathbb{F}_{q}$.
\begin{proposition}[\cite{4}]
\[Rank(\mathbf{\dot{A}}_{11})=\begin{cases}
\frac{q^{2}-1}{4}+q-1,& if~q \equiv 1~(mod~4),\\
\frac{q^{2}-1}{4}+q+1,& if~q \equiv 3~(mod~4).
\end{cases}\]
\end{proposition}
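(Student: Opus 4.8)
\medskip\par\noindent\textbf{Proof strategy.}\
The plan is first to translate $\mathbf{\dot A}_{11}$ into geometric language and then to compute its $2$-rank. Under the polarity $P\mapsto P^{\perp}$ attached to $Q$, the square-anisotropic points of $PG(2,q)$ are precisely the $\frac{q(q+1)}{2}$ \emph{exterior} points of the conic $\mathcal C\colon Q=0$ (the points lying on two tangents of $\mathcal C$), and $P^{\perp}$ is then the secant line through the two contact points. Since $P\in(P')^{\perp}\iff\langle P,P'\rangle=0$, identifying the row labelled by $(P')^{\perp}$ with the column labelled by $P'$ turns $\mathbf{\dot A}_{11}$ into the symmetric $0$-$1$ matrix $N$ indexed by the exterior points, with $N_{P,P'}=1\iff\langle P,P'\rangle=0$ and $N_{P,P}=0$ (as $P$ is anisotropic). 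In the model $\mathcal C=\{(1,t,t^{2}):t\in\mathbb F_q\}\cup\{(0,0,1)\}$ the exterior point ``$\{t_{1},t_{2}\}$'' has coordinates $\bigl(1,\tfrac{t_{1}+t_{2}}{2},t_{1}t_{2}\bigr)$, and a short computation shows $N_{\{t_1,t_2\},\{t_3,t_4\}}=1$ exactly when the cross ratio $(t_{1},t_{2};t_{3},t_{4})=-1$. Thus $N$ is the adjacency matrix of the ``harmonic-pairs'' graph on the $2$-subsets of $PG(1,q)$, a $\tfrac{q-1}{2}$-regular graph acted on by $PGL(2,q)$.

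Two observations organise the rank computation over $\mathbb F_2$. First, $N$ is symmetric with zero diagonal, hence it is an alternating form over $\mathbb F_2$ and $\mathrm{rank}_2 N$ is \emph{even}; this accounts for the $\mp1$ in the statement. Second, I would compute $N^{2}\bmod 2$ combinatorially: $(N^{2})_{P,P'}$ is the number of exterior points orthogonal to both $P$ and $P'$, and since $\{P,P'\}^{\perp}$ is a single point $R$, this equals $1$ or $0$ according as $R$ is exterior or not. Because in a self-polar triangle the product of the three norms is $\mathrm{disc}(Q)\equiv-1$ modulo squares, $R$ is exterior precisely when the line $PP'$ is a secant, i.e.\ when the cross ratio $\lambda$ of $P,P'$ is a square; and the diagonal of $N^{2}$ equals the valency $\tfrac{q-1}{2}\bmod 2$, which is $1$ if $q\equiv3\pmod4$ and $0$ if $q\equiv1\pmod4$. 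Hence $N^{2}\equiv\epsilon I+S\pmod2$, where $\epsilon=\tfrac{q-1}{2}\bmod 2$ and $S$ is the matrix of the relation ``$PP'$ is a secant'', so the residue of $q$ mod $4$ already enters.

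To pin the rank down I would bring in the $\frac{q(q+1)}{2}\times(q+1)$ incidence matrix $M$ of exterior points against the $q+1$ points of $\mathcal C$, i.e.\ the membership matrix $M_{\{t_1,t_2\},t}=1\iff t\in\{t_1,t_2\}$. One checks $M^{\mathsf T}M\equiv I+J\pmod2$, which is nonsingular of order $q+1$, so $M$ has full column rank $q+1$ over $\mathbb F_2$; and a one-line count gives $NM\equiv M+J\pmod2$ (here $J$ denotes an all-ones matrix of the appropriate size), whence $(N+I+J)M\equiv0$ and the $(q+1)$-dimensional column space of $M$ lies inside $\ker_2(N+I+J)$. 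Adjoining $NJ=JN\equiv\epsilon J$, $J^{2}\equiv\bar\epsilon J$ with $\bar\epsilon=\tfrac{q+1}{2}\bmod2$, and the analogous identities for $S$, one obtains enough relations inside the commutative $\mathbb F_2$-algebra generated by $N,S,J$ to reduce the problem to the minimal polynomial of $N$ modulo $2$ and the dimensions of its root spaces. Together with the evenness of $\mathrm{rank}_2 N$ and a matching lower bound — exhibiting a nonsingular submatrix of size $\tfrac{q^{2}-1}{4}+q\mp1$ — this should give $\mathrm{rank}_2 N=\tfrac{q^{2}-1}{4}+q-1$ when $q\equiv1\pmod4$ and $\tfrac{q^{2}-1}{4}+q+1$ when $q\equiv3\pmod4$.

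The main obstacle is this last step: the relations above do not by themselves single out the value, and one has to prove, for instance, that $\ker_2(N+I+J)$ is \emph{exactly} the column space of $M$ and then control the one remaining multiplicity. This can be done either by writing down an explicit full-dimensional family of null vectors of $N$, or by analysing the $\mathbb F_2[PGL(2,q)]$-module $\mathbb F_2^{\,\mathcal E}$ — which is genuinely modular, since $8\mid|PGL(2,q)|$ for every odd $q$. It is exactly here that the dichotomy $\sqrt{-1}\in\mathbb F_q$ (i.e.\ $q\equiv1$) versus $\sqrt{-1}\notin\mathbb F_q$ ($q\equiv3$) reappears, beyond the parity of the valency, and where essentially all of the work lies.
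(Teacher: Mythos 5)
This proposition is not proved in the paper at all --- it is quoted from \cite{4} (Sin--Wu--Xiang), so the only question is whether your argument stands on its own. Your reduction is the right one and matches the setup of \cite{4}: identifying rows with columns via the polarity turns $\mathbf{\dot A}_{11}$ into the symmetric zero-diagonal matrix $N$ of the conjugacy relation on exterior points, equivalently the harmonic-pairs graph on $2$-subsets of the conic; the evenness of $\mathrm{rank}_2 N$ and the identity $N^{2}\equiv\epsilon I+S$ are correct and genuinely useful. But the proposal does not prove the statement, and you say so yourself: the identities you list constrain the minimal polynomial of $N$ over $\mathbb{F}_2$, and a minimal polynomial never determines the rank --- one must compute the dimensions of the root spaces, which is exactly the content of the theorem. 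In \cite{4} this takes a full analysis of the $\mathbb{F}_2 PGL(2,q)$-permutation module on exterior points; nothing in your sketch substitutes for it. (Contrast the paper's own Lemma 3.4 for the larger matrix $\mathbf{A}_{11}$, where the single identity $(\mathbf{A}_{11})^{4}=\mathbf{J}-\mathbf{I}$ really does pin the nullity down to $1$; no such clean identity exists for $\mathbf{\dot A}_{11}$ alone, which is why that case is hard.)

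There is also a concrete error in the one auxiliary computation you do carry out. With $M$ the membership matrix of exterior points (as $2$-subsets $\{t_1,t_2\}$) against the $q+1$ conic points, the diagonal of $M^{\mathsf T}M$ is $q\equiv 1$ and the off-diagonal entries are $1$, so $M^{\mathsf T}M\equiv \mathbf{J}\pmod 2$, not $\mathbf{I}+\mathbf{J}$; and since every row of $M$ contains exactly two ones, the columns of $M$ sum to zero mod $2$, so $M$ has column rank $q$, not $q+1$. Hence the claimed $(q+1)$-dimensional subspace of $\ker_2(N+\mathbf{I}+\mathbf{J})$ is not established, and the intended ``matching lower bound'' is not exhibited at all. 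As it stands the argument proves only that $\mathrm{rank}_2\mathbf{\dot A}_{11}$ is even and satisfies the polynomial constraints; the stated values are not derived.
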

\begin{proposition}[\cite{6}]
\[Rank(\mathbf{\dot{A}}_{12})=Rank(\mathbf{\dot{A}}_{21})=\begin{cases}
\frac{(q-1)^{2}}{4}+q,& if~q \equiv 1~(mod~4),\\
\frac{(q-1)^{2}}{4}+q-1,& if~q \equiv 3~(mod~4).
\end{cases}\]
\end{proposition}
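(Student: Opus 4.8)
{\bf Proof idea.~~} The equality $Rank(\mathbf{\dot{A}}_{12})=Rank(\mathbf{\dot{A}}_{21})$ is formal: by symmetry of $\langle\cdot,\cdot\rangle$ the matrix $\mathbf{\dot{A}}_{21}$ is the transpose of $\mathbf{\dot{A}}_{12}$ under the polarity $P\mapsto P^{\perp}$, so I only need $Rank(\mathbf{\dot{A}}_{12})$. The first step is to translate this matrix into a purely geometric incidence. For a nonsquare anisotropic point $T$ the polar $T^{\perp}$ misses the conic $C=\{Q=0\}$ (an \emph{external line}); for a square anisotropic point $P$ the polar $P^{\perp}$ is a \emph{secant}; for an isotropic point it is a tangent. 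The three point--classes and the three line--classes have matching sizes $\binom{q+1}{2},\binom{q}{2},q+1$, so the polarity restricts to bijections among them, and after relabelling $\mathbf{\dot{A}}_{12}$ becomes the incidence matrix with rows the secants of $C$, columns the nonsquare anisotropic (\emph{internal}) points, and $(\ell,T)$--entry $1$ iff $T\in\ell$.

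Second I would parametrise $C\cong\mathbb{P}^{1}(\mathbb{F}_{q})$, so that the stabiliser of $C$ in $PGL_{3}(q)$ is $G\cong PGL_{2}(q)$ acting naturally; then a secant is an unordered pair $\{s,t\}$ of $\mathbb{F}_{q}$--points of $\mathbb{P}^{1}$, and an internal point --- being the pole of an external line --- corresponds to a conjugate pair $\{c,c^{q}\}$ with $c\in\mathbb{P}^{1}(\mathbb{F}_{q^{2}})\setminus\mathbb{P}^{1}(\mathbb{F}_{q})$. A direct computation with $Q'(X)=\alpha x_{1}^{2}-x_{0}x_{2}$ shows the internal point $\{c,c^{q}\}$ lies on the secant $\{s,t\}$ precisely when the cross--ratio $(s,t;c,c^{q})=-1$, i.e. when $\{s,t\}$ is a cycle of the (fixed--point--free on $\mathbb{P}^{1}(\mathbb{F}_{q})$) involution of $\mathbb{P}^{1}$ with fixed points $c,c^{q}$. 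Consequently each secant carries $(q-1)/2$ internal points and each internal point lies on $(q+1)/2$ secants, so $\mathbf{\dot{A}}_{12}$ has constant row and column sums $(q-1)/2$ and $(q+1)/2$. Over $\mathbb{F}_{2}$ this already distinguishes the two cases: when $q\equiv1\,(\mathrm{mod}\,4)$ the all--ones vector on internal points is in the kernel, and when $q\equiv3\,(\mathrm{mod}\,4)$ the all--ones vector on secants is in the left kernel --- this accounts for the ``$+q$'' versus ``$+q-1$''.

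The core of the argument is the $\mathbb{F}_{2}$--rank. Because incidence is $G$--invariant, $\mathbf{\dot{A}}_{12}$ is an $\mathbb{F}_{2}G$--homomorphism from the permutation module on internal points (transitive, point stabiliser dihedral of order $2(q+1)$) to the $2$--subset permutation module of $PGL_{2}(q)$ on $\mathbb{P}^{1}(\mathbb{F}_{q})$, and the rank is the $\mathbb{F}_{2}$--dimension of its image. I would bracket this dimension. For the lower bound I would exhibit a nonsingular submatrix of size $\tfrac{(q-1)^{2}}{4}+q$ (respectively $+q-1$), choosing rows and columns adapted to the orbits of a well--chosen subgroup so that the verification reduces, via that subgroup action, to a bounded family of explicit determinants / character sums over $\mathbb{F}_{q}$ evaluated mod $2$. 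For the upper bound I would produce the predicted codimension's worth of kernel vectors --- the all--ones vector when $q\equiv1\,(\mathrm{mod}\,4)$ together with ``even'' configurations of internal points coming from the split involutions of $\mathbb{P}^{1}$ and their sums --- and show they exhaust the kernel by recognising their span as a distinguished $G$--submodule whose dimension is then a counting matter; the two residue classes differ only by whether the trivial submodule splits off, i.e. by the parity of $(q\pm1)/2$.

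I expect the last step to be the only real difficulty. Over $\mathbb{Q}$ the rank is routine, since $\mathbf{\dot{A}}_{12}\mathbf{\dot{A}}_{12}^{\top}$ lies in the Bose--Mesner algebra of the (known) association scheme on the external points of $C$, so its eigenvalues and multiplicities can be written down; but the $2$--rank is strictly smaller and not determined by the rational rank, and $2\mid|C|=q+1$, so the interaction of the trivial constituent with the rest needs care. Since here $n=2$, a concrete alternative to the module analysis is to write $\mathbf{\dot{A}}_{12}$ out explicitly in the $Q'$--coordinates and run a structured Gaussian elimination that exploits the transitive $G$--action on both rows and columns; that is the fallback I would use if the modular bookkeeping became awkward.
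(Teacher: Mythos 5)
The paper offers no proof of this proposition to compare yours against: it is stated as a quotation of Wu's paper (reference [6] in the bibliography), so your attempt has to be judged on its own. As a \emph{setup} it is correct and well chosen. The reduction to $Rank(\mathbf{\dot{A}}_{12})$ via the polarity, the translation into the secant-line versus internal-point incidence, the identification of internal points with Galois-conjugate pairs $\{c,c^{q}\}$ on $\mathbb{P}^{1}(\mathbb{F}_{q^{2}})$, the harmonic-conjugacy criterion for incidence, the constant row and column sums $\frac{q-1}{2}$ and $\frac{q+1}{2}$, and the resulting parity statements about the all-ones vectors are all right, and the $\mathbb{F}_{2}PGL_{2}(q)$-module framing is indeed the framework in which the published proof is carried out.

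The genuine gap is that the actual value $\frac{(q-1)^{2}}{4}+q$ (respectively $+q-1$) is never derived; everything after ``the core of the argument is the $\mathbb{F}_{2}$-rank'' is a plan rather than an argument. You propose a lower bound by exhibiting a nonsingular submatrix of exactly the right size and an upper bound by producing kernel vectors and showing they exhaust the kernel, but no submatrix is written down, no spanning set of kernel vectors is produced, and no mechanism is given for proving either bound. The parity observation locates only a single vector in a null space whose dimension grows like $q^{2}/4$ (for $q\equiv 1\pmod 4$ it equals $\frac{q^{2}-4q-1}{4}$), so it cannot by itself ``account for the $+q$ versus $+q-1$.'' This missing step is precisely where the whole difficulty of the theorem sits: the statement was a conjecture of Droms--Mellinger--Meyer until Wu proved it, and the known argument requires a detailed determination of the $2$-modular composition factors of the two permutation modules for $PGL_{2}(q)$ together with nontrivial character-sum computations; ``structured Gaussian elimination exploiting the $G$-action'' is not a realistic fallback for a general odd prime power $q$. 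In short, you have correctly reconstructed the scaffolding of the known proof, but the proof itself is absent.
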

\begin{proposition}[\cite{5}]
\[Rank(\mathbf{\dot{A}}_{22})=\frac{q^{2}-1}{4}\]
\end{proposition}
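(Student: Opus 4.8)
\medskip\noindent\textbf{Proof idea.}
First, identifying each hyperplane with its pole turns $A$, hence $\mathbf{\dot{A}}_{22}$, into a symmetric $0/1$ matrix whose diagonal vanishes (because $Q(P)\ne0$ for nonsquare anisotropic $P$); so $\mathbf{\dot{A}}_{22}$ is the adjacency matrix of the graph $\Gamma$ on the set $\mathcal N$ of nonsquare anisotropic points given by $P\sim R\iff\langle P,R\rangle=0$. The standard count of the $\mathbb F_q$-solutions of $Q'=c$ ($q^{2}$ for $c=0$, $q^{2}+q$ for $c$ a nonzero square, $q^{2}-q$ for $c$ a nonsquare) yields $|\mathcal N|=\tfrac{q(q-1)}{2}$. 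Normalising the first coordinate, a nonsquare anisotropic point is uniquely $[1:a:\alpha a^{2}-s]$ with $a\in\mathbb F_q$ and $s$ a nonsquare — there are $q\cdot\tfrac{q-1}{2}=|\mathcal N|$ of them — and a short computation with the bilinear form gives, under $P\leftrightarrow(a,s)$,
\[
(\mathbf{\dot{A}}_{22})_{(a,s),(b,t)}=1\ \Longleftrightarrow\ s+t=\alpha(b-a)^{2}.
\]
Thus $\mathbf{\dot{A}}_{22}$ is the principal submatrix — indexed by the nonsquare values of $s$ and $t$ — of the $q^{2}\times q^{2}$ matrix $\widehat M=\big([\,s+t=\alpha(b-a)^{2}\,]\big)_{s,t\in\mathbb F_q}$, which is itself the point--parabola incidence matrix of the $q^{2}$ parabolas $y=\alpha(x-a)^{2}-s$ in $AG(2,q)$.

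Second, I would compute $\operatorname{rank}_2\widehat M$. Replacing the column index $t$ by $-t$ makes $\widehat M$ the matrix of convolution by $g(u,w)=[\,w=\alpha u^{2}\,]$ on the additive group $\mathbb F_q^{2}$; since $|\mathbb F_q^{2}|$ is odd, $\mathbb F_2[\mathbb F_q^{2}]$ is semisimple and $\operatorname{rank}_2\widehat M=q^{2}-\#\{\chi:\widehat g(\chi)=0\}$, the count running over the $\overline{\mathbb F}_2$-characters $\chi=\psi_1\otimes\psi_2$ of $\mathbb F_q^{2}$, where $\widehat g(\psi_1\otimes\psi_2)=\sum_{u\in\mathbb F_q}\psi_1(u)\psi_2(\alpha u^{2})$. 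Completing the square expresses this sum as a root of unity times a quadratic Gauss sum over $\mathbb F_q$, which is a unit of $\overline{\mathbb F}_2$ (its square is $\pm q\equiv\pm1$), so $\widehat g(\chi)=0$ exactly when $\psi_2=1$ and $\psi_1\ne1$ — that is, for $q-1$ characters — and hence $\operatorname{rank}_2\widehat M=q^{2}-q+1$.

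The main obstacle is the descent from $\widehat M$ to the nonsquare$\,\times\,$nonsquare block $\mathbf{\dot{A}}_{22}$: the ``nonsquare'' condition is multiplicative and mixes the additive characters, so the block rank does not simply fall out. The extra structure I would use is that $\mathbf{\dot{A}}_{22}$ commutes with the projective stabiliser $G\cong PGL_2(q)$ of the conic acting on $\mathcal N$ (transitively, with dihedral point stabiliser of order $2(q+1)$); hence $\mathbb F_2[\mathcal N]$ is an $\mathbb F_2[G]$-module, $\mathbf{\dot{A}}_{22}$ an endomorphism of it, and $\operatorname{rank}_2\mathbf{\dot{A}}_{22}$ is the total $\mathbb F_2$-dimension of the $2$-modular constituents it does not annihilate. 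To identify those I would start from the geometric relation $\mathbf{\dot{A}}_{22}^{\,2}=\tfrac{q+1}{2}I+N$ over $\mathbb F_2$, where $N$ is the adjacency matrix of the ``external-line'' graph on $\mathcal N$ (for $P\ne R$, $(\mathbf{\dot{A}}_{22}^{\,2})_{P,R}$ counts the points of $\mathcal N$ on $P^{\perp}\cap R^{\perp}$, namely the point dual to the line $PR$, which lies in $\mathcal N$ precisely when $PR$ misses the conic), complete $\{I,J,\mathbf{\dot{A}}_{22},N,\dots\}$ to a presentation of the commutant of the $G$-action, and then evaluate the Jacobi and (incomplete) Gauss sums that arise to decide which constituents survive mod $2$ — the two cases $q\equiv1$ and $q\equiv3\pmod4$ accounting for the two-case answer. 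A more hands-on alternative is to exhibit $\big(\tfrac{q-1}{2}\big)^{2}=\tfrac{(q-1)^{2}}{4}$ vectors spanning $\ker_{\mathbb F_2}\mathbf{\dot{A}}_{22}$ — equivalently, the subsets of $\mathcal N$ meeting every external line in an even number of points — together with a matching set of $\tfrac{q^{2}-1}{4}$ independent rows; the fact that the corank equals the square of the number of nonsquares hints that this kernel has a basis indexed by pairs of nonsquares. Either way the difficulty is irreducibly modular: the kernel is about half of the $\tfrac{q(q-1)}{2}$ coordinates, so the $2$-rank drop cannot be read off from the rational rank (the graph $\Gamma$ is in general not even strongly regular) or from elementary invariants.
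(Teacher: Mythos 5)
First, a point of reference: the paper does not prove this proposition at all --- it is quoted from Madison and Wu \cite{5}, so there is no in-paper proof to compare against. Judged on its own terms, your write-up is a reduction plus a plan, not a proof. The parts you actually carry out are correct and check out: the identification of $\mathbf{\dot{A}}_{22}$ with the orthogonality graph on the $\frac{q(q-1)}{2}$ internal points, the parametrisation $[1:a:\alpha a^{2}-s]$ and the adjacency condition $s+t=\alpha(b-a)^{2}$, and the semisimple group-algebra computation giving $\operatorname{rank}_2\widehat{M}=q^{2}-q+1$ (the Gauss-sum square being $\pm q\equiv\pm1$ in characteristic $2$ is the right reason the nontrivial-$\psi_2$ transforms survive). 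But you say yourself that the descent from $\widehat{M}$ to its nonsquare-by-nonsquare principal block ``does not simply fall out,'' and that descent is the entire content of the theorem: a principal submatrix of a matrix of known $2$-rank can a priori have almost any rank, and here the corank jumps from $q-1$ to $\frac{(q-1)^{2}}{4}$, so nothing from the first two steps transfers.

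Of the two strategies you sketch for closing the gap, the first (decompose $\mathbb{F}_2[\mathcal{N}]$ as a module for the stabiliser of the conic and decide which $2$-modular constituents $\mathbf{\dot{A}}_{22}$ annihilates) is in fact how \cite{5} proceeds, so you are pointing in the right direction --- but that route requires identifying the composition factors of the permutation module of $PSL(2,q)$ on internal points mod $2$ and then a genuinely delicate character/eigenvalue computation on each; none of this is done, and the identity $\mathbf{\dot{A}}_{22}^{2}=\frac{q+1}{2}\mathbf{I}+\mathbf{N}$ alone does not determine the rank (it only bounds the kernel in terms of that of $\mathbf{N}+\frac{q+1}{2}\mathbf{I}$, which is itself unknown at that point). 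The second strategy (exhibit $\frac{(q-1)^{2}}{4}$ independent kernel vectors and $\frac{q^{2}-1}{4}$ independent rows) is stated only as a numerological hint. One small slip worth fixing: you refer to ``the two-case answer,'' but this proposition has a single uniform answer $\frac{q^{2}-1}{4}$ for all odd $q$; the $q\equiv1$ versus $q\equiv3\pmod 4$ dichotomy belongs to the $\mathbf{\dot{A}}_{11}$ and $\mathbf{\dot{A}}_{12}$ statements. So: correct set-up, correct auxiliary rank, but the decisive modular step is missing.
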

\begin{proposition}[\cite{3}]
The $2$-rank of $\mathbf{\dot{A}}_{13}(\mathbf{\dot{A}}_{31})$ and $\mathbf{\dot{A}}_{33}$ are $q$ and $q+1$, respectively. Moreover, $\mathbf{\dot{A}}_{23}(\mathbf{\dot{A}}_{32})$ is a zero matrix.
\end{proposition}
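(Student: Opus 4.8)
The plan is to determine each of the four sub-matrices explicitly from the geometry of the conic $\mathcal{C}=\{R\in PG(2,q):Q(R)=0\}$ of isotropic points, which has $q+1$ points, together with the fact (used already in this section) that the polarity $P\mapsto P^{\perp}$ is a bijection between the points and the lines of $PG(2,q)$. First I would dispose of $\mathbf{\dot{A}}_{33}$: its $(R^{\perp},R')$-entry is $1$ exactly when $\langle R,R'\rangle=0$, and $\langle R,R\rangle=Q(R)=0$ makes every diagonal entry $1$, while for distinct isotropic $R,R'$ one cannot have $\langle R,R'\rangle=0$, since otherwise $\mathrm{span}(R,R')$ would be a $2$-dimensional totally isotropic subspace, impossible for a nondegenerate ternary form (whose maximal totally isotropic subspaces are $1$-dimensional); equivalently $R^{\perp}$ is the tangent to $\mathcal{C}$ at $R$ and meets $\mathcal{C}$ only there. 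Hence $\mathbf{\dot{A}}_{33}=I_{q+1}$ and its $2$-rank is $q+1$.

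Next I would analyse $P^{\perp}$ for an anisotropic point $P$. Here $V=\mathbb{F}_{q}P\perp P^{\perp}$ is an orthogonal splitting, so $Q\cong\langle Q(P)\rangle\perp(Q|_{P^{\perp}})$ and hence $\mathrm{disc}(Q|_{P^{\perp}})\equiv Q(P)\cdot\mathrm{disc}(Q)\pmod{(\mathbb{F}_{q}^{\ast})^{2}}$. Since $\alpha$ is a square, $\mathrm{disc}(Q)=-\alpha\equiv-1$, and the nondegenerate binary form $Q|_{P^{\perp}}$ has two isotropic points or none according as its discriminant is, or is not, $\equiv-1$, i.e.\ according as $Q(P)$ is a square or a non-square. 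Consequently $P^{\perp}$ contains exactly two isotropic points when $P$ is square anisotropic and no isotropic point when $P$ is non-square anisotropic. In particular $\mathbf{\dot{A}}_{23}$ and its transpose $\mathbf{\dot{A}}_{32}$ are zero matrices, and every row of $\mathbf{\dot{A}}_{13}$ contains exactly two $1$'s.

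It then remains to compute the $2$-rank of $\mathbf{\dot{A}}_{13}$. By the preceding two paragraphs the polarity sends a square anisotropic point to a line meeting $\mathcal{C}$ in two points, a non-square anisotropic point to a line meeting $\mathcal{C}$ in no point, and an isotropic point to a line meeting $\mathcal{C}$ in one point; since a line meets a conic in at most two points and the polarity is a bijection, it therefore restricts to a bijection between the square anisotropic points and the secant lines of $\mathcal{C}$. Composing with the bijection that sends a secant line to the unordered pair of points of $\mathcal{C}$ it carries (a line being determined by any two of its points), one sees that $\mathbf{\dot{A}}_{13}$ is precisely the edge--vertex incidence matrix over $\mathbb{F}_{2}$ of the complete graph $K_{q+1}$ on the $q+1$ isotropic points, and that $\mathbf{\dot{A}}_{31}=\mathbf{\dot{A}}_{13}^{\,T}$. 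A vector $x\in\mathbb{F}_{2}^{q+1}$ lies in the right null space of this incidence matrix iff $x_{u}=x_{v}$ for every edge $\{u,v\}$, i.e.\ iff $x$ is constant on each connected component; as $K_{q+1}$ is connected the null space is $\{0,\mathbf{1}\}$, of dimension $1$, so the $2$-rank is $(q+1)-1=q$, and $\mathbf{\dot{A}}_{31}$ has the same rank.

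The step I expect to be the crux is the discriminant computation in the second paragraph: one must pin down that ``$P^{\perp}$ meets $\mathcal{C}$'' corresponds to ``$Q(P)$ a square'' and not to the opposite parity. This relies on $\alpha$ being a square (so that $\mathrm{disc}(Q)\equiv-1$) and on a consistent normalization of the binary discriminant, and is worth checking directly for a small value such as $q=3$. The other ingredients---the bound on totally isotropic subspaces of a ternary form, the bijectivity of the polarity, the classification of lines meeting a conic, and the $\mathbb{F}_{2}$-rank of the incidence matrix of a connected graph---are all standard.
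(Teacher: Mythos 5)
Your proposal is correct and complete. Note that the paper does not actually prove this proposition: it is quoted from reference \cite{3} (Droms--Mellinger--Meyer) without argument, so there is no in-paper proof to compare against. Your argument is the standard one and is consistent with the incidence data the paper does record in its Tables 1--2 (tangent lines carry one absolute point, secant lines two, passant lines none, and the polarity matches point types to line types): the identification $\mathbf{\dot{A}}_{33}=\mathbf{I}_{q+1}$ via the uniqueness of the point of tangency (no two distinct isotropic points are orthogonal, since a nondegenerate ternary form has Witt index $1$), the vanishing of $\mathbf{\dot{A}}_{23}$ and $\mathbf{\dot{A}}_{32}$ because passants miss the conic, and the identification of $\mathbf{\dot{A}}_{13}$ with the edge--vertex incidence matrix of $K_{q+1}$, whose $2$-rank is $q$ by connectedness. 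The discriminant computation you flag as the crux is right as stated: with $\alpha$ a square one has $\mathrm{disc}(Q)\equiv-1$, so $Q|_{P^{\perp}}$ is isotropic precisely when $Q(P)$ is a square, which is exactly the convention under which the paper's tables are written (external $=$ square anisotropic points polarize to secants). The only cosmetic caveat is that $\mathbf{\dot{A}}_{31}$ equals $\mathbf{\dot{A}}_{13}^{T}$ only after matching the row and column orderings, but this does not affect the rank.
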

In the rest of this paper, we only consider the following partition of $A$:
\begin{equation}\label{eq:2}
\left( \begin{array}{cc}
\mathbf{A}_{11}& \mathbf{A}_{12}\\
\mathbf{A}_{21}& \mathbf{A}_{22}\\
\end{array} \right),
\end{equation}
where the columns of $\mathbf{A}_{11}$, $\mathbf{A}_{12}$ are labeled by the anisotropic, isotropic points respectively, and the rows of $\mathbf{A}_{11}$, $\mathbf{A}_{21}$ are labeled by their corresponding hyperplanes. It is obviously that in this two partitions $\mathbf{A}_{22}=\mathbf{\dot{A}}_{33}$. Again from \cite{3} we have known the 2-rank of $\mathbf{A}_{12}$ and $\mathbf{A}_{21}$ under the quadratic form $Q'(X)=\alpha x_{1}^{2}-x_{0}x_{2}$ with $\alpha$ a nonzero square element of $\mathbb{F}_{q}$ in the case of $n=2$.
\begin{proposition}[\cite{3}]
\[Rank(\mathbf{A}_{12})=Rank(\mathbf{A}_{21})=q,~~Rank(\mathbf{A}_{22})=q+1.\]
\end{proposition}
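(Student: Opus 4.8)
The plan is to recognize each of the three matrices as a concrete combinatorial object and read off its $\mathbb{F}_{2}$-rank directly. Since $n=2$ we work in the projective plane $PG(2,q)$, and the isotropic points are precisely the $q+1$ points of the nondegenerate conic $\mathcal{C}=\{Q=0\}$; under the coordinates $Q'(X)=\alpha x_{1}^{2}-x_{0}x_{2}$ one may list them explicitly as $\{(1,t,\alpha t^{2}):t\in\mathbb{F}_{q}\}\cup\{(0,0,1)\}$, and the anisotropic points are the remaining $q^{2}$ points. The polar map $P\mapsto P^{\perp}$ is a bijection from points to lines because the form is nondegenerate; it sends each point of $\mathcal{C}$ to the tangent of $\mathcal{C}$ at that point, and hence, by injectivity, it sends every anisotropic point to a non-tangent line. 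Since $q$ is odd, a non-tangent line meets $\mathcal{C}$ in $0$ or $2$ points. These are the only geometric inputs, all standard for conics in odd characteristic.

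For $\mathbf{A}_{22}$ the rows are the tangent lines $R^{\perp}$ with $R\in\mathcal{C}$ and the columns are the points $R'\in\mathcal{C}$, with a $1$ exactly when $\langle R,R'\rangle=0$. Since $\langle R,R\rangle=Q(R)=0$ the diagonal is all $1$'s, and since the tangent at $R$ meets $\mathcal{C}$ only at $R$ there are no off-diagonal $1$'s; hence $\mathbf{A}_{22}$ is the $(q+1)\times(q+1)$ identity matrix and $Rank(\mathbf{A}_{22})=q+1$. For $\mathbf{A}_{12}$ each row is the incidence vector of $P^{\perp}\cap\mathcal{C}$ for an anisotropic $P$, so by the observation above every row is either the zero vector or of the form $e_{R}+e_{R'}$ for two distinct points $R,R'$ of $\mathcal{C}$; conversely the secant through any two distinct points $R,R'$ of $\mathcal{C}$ is $P^{\perp}$ for a unique $P$, which must be anisotropic (else $P^{\perp}$ would be a tangent), so every such $e_{R}+e_{R'}$ occurs as a row. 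Hence the $\mathbb{F}_{2}$-row space of $\mathbf{A}_{12}$ is the span of all $e_{R}+e_{R'}$, namely the even-weight subcode of $\mathbb{F}_{2}^{\,q+1}$, which has dimension $q$; so $Rank(\mathbf{A}_{12})=q$. Finally, symmetry of the bilinear form gives $R'\in P^{\perp}\Longleftrightarrow P\in (R')^{\perp}$, so $\mathbf{A}_{21}=\mathbf{A}_{12}^{T}$ and $Rank(\mathbf{A}_{21})=q$ as well.

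I do not expect a genuine obstacle. The proof rests on (i) the classification of lines of $PG(2,q)$ relative to a conic, classical for odd $q$; (ii) the fact that the polar of an anisotropic point is never a tangent, which follows from injectivity of the polar map together with the identification of the tangents as the polars of the conic points; and (iii) the elementary fact that the vectors $e_{R}+e_{R'}$ span exactly the even-weight code. The only place needing care is bookkeeping: making sure ``isotropic'' is matched with ``on the conic'' and that the rows and columns of $\mathbf{A}_{12}$ and $\mathbf{A}_{21}$ are labelled as the statement intends, so that the transpose identity $\mathbf{A}_{21}=\mathbf{A}_{12}^{T}$ holds literally.
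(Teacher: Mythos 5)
Your proof is correct, and it is essentially the standard argument: the paper itself gives no proof of this proposition (it is quoted from reference \cite{3}), where the same identifications are made --- $\mathbf{A}_{22}$ is the identity because the polar of a conic point is its tangent, the rows of $\mathbf{A}_{12}$ are exactly the zero vector (passants) and all vectors $e_{R}+e_{R'}$ (secants), which span the even-weight code of dimension $q$, and $\mathbf{A}_{21}=\mathbf{A}_{12}^{T}$ by symmetry of the bilinear form. Your bookkeeping of rows versus columns agrees with the paper's labelling convention, so no issues.
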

The unresolved cases when $n=2$ under this partition will be discussed in Section 3.
Moreover, for the four sub-matrices coming from this partition, we have a general conjecture:
\begin{conjecture}\label{Con:1.7}
In $PG(n,q)$, the $2$-rank of sub-matrices coming from the first partition of $\mathbf{A}$ under the quadratic form with $\alpha$ a nonzero square or non-square element of $\mathbb{F}_{q}$ are as follows:
\begin{enumerate}
   \item
    \begin{enumerate}[$(a)$]
    \item when $n$ is odd, $\mathbf{A}_{11}$ is full rank.\\
    \item when $n$ is even, the $2$-rank of $\mathbf{A}_{11}$ is one less than its order.
    \end{enumerate}
   \item $\mathbf{A}_{22}$ is always full rank.\\
   \item  \begin{enumerate}[$(a)$]
           \item when $n \geq 3$ is odd and , the 2-rank of $\mathbf{A}_{12}(\mathbf{A}_{21})$ is
            \begin{enumerate}[$(i)$]
              \item $q^{n-1}+q^{n-3}$, when $\alpha$ is a nonzero square element;\\
              \item $q^{n-1}+q^{n-3}+\cdots+q^{2}$, when $\alpha$ is a non-square element.
            \end{enumerate}
          \item when $n$ is even, the 2-rank of $\mathbf{A}_{12}(\mathbf{A}_{21})$ is $q^{n-1}+q^{n-3}+\cdots+q$.
         \end{enumerate}

  \end{enumerate}
\end{conjecture}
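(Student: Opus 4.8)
\medskip\noindent\textbf{Proof proposal.}
With the labelling of the paper, $\mathbf{A}$ is symmetric, and its $(P,P)$-entry is $1$ or $0$ according as $P$ is isotropic or anisotropic (since $P\in P^{\perp}\Leftrightarrow Q(P)=0$); in particular $\mathbf{A}_{11}$ is alternating over $\mathbb{F}_{2}$ and $\mathbf{A}_{22}=I+\mathbf{N}$ with $\mathbf{N}$ alternating. Moreover $\mathbf{A}\mathbf{1}$ is the constant vector whose value is the number $q^{n-1}+\cdots+q+1\equiv n\pmod 2$ of hyperplanes through a point, so $\mathbf{1}\in\ker_{\mathbb{F}_{2}}\mathbf{A}$ precisely when $n$ is even; with Proposition~\ref{Prop:1.1} this gives $\ker_{\mathbb{F}_{2}}\mathbf{A}=0$ for $n$ odd and $\ker_{\mathbb{F}_{2}}\mathbf{A}=\langle\mathbf{1}\rangle$ for $n$ even. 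The plan is to prove the conjecture for $n=1,2$, and to observe that parts $1(a)$ and $1(b)$ are in fact consequences of part $2$ for every $n$, so that part $2$ (together with part $3$) carries the real difficulty.

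For the reduction of part $1$ to part $2$, let $v\in\ker_{\mathbb{F}_{2}}\mathbf{A}_{11}$ and extend $v$ by $0$ on the isotropic points to $\bar v$; then $\bar v$ is supported off the quadric $\mathcal{Q}$ and has vanishing sum over every non-tangent hyperplane, i.e.\ over every $P^{\perp}$ with $P$ anisotropic. Summing the quantity $\sum_{R\in H}\bar v_{R}$ over the hyperplanes $H$ through a fixed isotropic point $x$, and using (as in the proof of Proposition~\ref{Prop:1.1}) that a point lies on $q^{n-1}+\cdots+1$ hyperplanes and two distinct points on $q^{n-2}+\cdots+1$ common hyperplanes, the non-tangent hyperplanes contribute nothing and one is left with $\mathbf{A}_{22}\tau=(n-1)\bigl(\sum_{R}\bar v_{R}\bigr)\mathbf{1}$, where $\tau$ is the vector indexed by the isotropic points $P$ with entries $\tau_{P}=\sum_{R\in P^{\perp}}\bar v_{R}$. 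Since $\tau=0$ forces $\bar v$ to have vanishing sum over \emph{every} hyperplane, hence $\bar v\in\ker_{\mathbb{F}_{2}}\mathbf{A}$, and the latter contains no nonzero vector supported off $\mathcal{Q}$, the map $v\mapsto\tau$ is injective on $\ker\mathbf{A}_{11}$. If $\mathbf{A}_{22}$ has full rank it then follows that $\ker\mathbf{A}_{11}=0$ when $n$ is odd (the right-hand side vanishes, so $\tau=0$ and hence $v=0$), while for $n$ even $\tau$ — and hence $v$ — is determined by the scalar $\sum_{R}\bar v_{R}\in\mathbb{F}_{2}$, so $\dim\ker\mathbf{A}_{11}\le1$; as $\mathbf{A}_{11}$ is alternating of order $q^{n}$ when $n$ is even, and the indicator vector of the anisotropic points then lies in $\ker\mathbf{A}_{11}$ (for $n$ even, a non-tangent hyperplane and the nondegenerate quadric it induces each contain an even number of points), parts $1(a)$ and $1(b)$ follow from part $2$.

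It remains to establish part $2$ (whence part $1$) and part $3$ for $n=1,2$, where $\mathcal{Q}$ is small and everything is explicit. For $n=1$: if $\alpha$ is a non-square there are no isotropic points and $\mathbf{A}_{11}=\mathbf{A}$ is full rank by Proposition~\ref{Prop:1.1}; if $\alpha$ is a square, then after scaling and the substitution $u=x_{0}-x_{1},\ v=x_{0}+x_{1}$ the form is $uv$, the isotropic points are $(1,0)$ and $(0,1)$, each on its own and on no other tangent hyperplane, so $\mathbf{A}_{22}=I$, while the anisotropic points are the $(1,t)$ with $t\in\mathbb{F}_{q}^{\ast}$ and $(1,s)\in(1,t)^{\perp}\Leftrightarrow s=-t$, so $\mathbf{A}_{11}$ is the permutation matrix of the fixed-point-free involution $t\mapsto-t$ and has full rank. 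For $n=2$, $\mathcal{Q}$ is a nondegenerate conic $\mathcal{C}$ with $q+1$ points, and since all such conics are projectively equivalent the ranks do not depend on $\alpha$; a tangent line meets $\mathcal{C}$ only at its point of tangency, so $\mathbf{A}_{22}=I$, giving part $2$ and hence part $1(b)$. For part $3(b)$: the row of $\mathbf{A}_{12}$ indexed by $P^{\perp}$ with $P$ anisotropic is the indicator of $P^{\perp}\cap\mathcal{C}$, which is empty when $P^{\perp}$ is external and a pair $\{R,R'\}\subseteq\mathcal{C}$ when $P^{\perp}$ is a secant; every pair occurs, since the pole of the secant $RR'$ is anisotropic and has hyperplane $RR'$; so the row space of $\mathbf{A}_{12}$ is exactly the even-weight code on the $q+1$ points of $\mathcal{C}$, of dimension $q$, and $\mathbf{A}_{21}=\mathbf{A}_{12}^{T}$ has the same rank.

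The main obstacle is part $2$ for $n\ge3$, equivalently the invertibility over $\mathbb{F}_{2}$ of $I+\mathbf{N}$, where $\mathbf{N}$ is the adjacency matrix of the collinearity graph of the polar space on $\mathcal{Q}$ (two points of the quadric being adjacent when the line through them lies on $\mathcal{Q}$), and, beyond that, the precise values in part $3$. For $n\ge3$ neither $\mathbf{A}_{22}$ nor $\mathbf{A}_{12}$ is controlled by a single classical code, and the case distinctions in part $3$ — the parity of $n$, and for odd $n$ whether $\alpha$ is a square (i.e.\ whether the form has type $O^{+}$ or $O^{-}$) — point to the natural framework being the decomposition of the $\mathbb{F}_{2}$-permutation module on the points of $PG(n,q)$ under $O(Q)(\mathbb{F}_{q})$ into the submodules generated by the isotropic and by the anisotropic points, with the block $2$-ranks read off from the multiplicities of its simple constituents; this is a cross-characteristic problem, since $2\nmid q$, and at present the stated formulas are supported only by the $Magma$ computations.
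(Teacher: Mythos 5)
Your proposal is correct in exactly the territory the paper itself covers --- the statement is a conjecture, and both you and the authors actually prove only the cases $n=1,2$, leaving $n\ge 3$ to \emph{Magma} evidence --- but your route through $n=2$ is genuinely different and in one respect stronger. The paper's proof that $Rank(\mathbf{A}_{11})=q^{2}-1$ goes through Lemma \ref{Le:3.4}: a block-by-block computation, using the incidence tables for the conic, showing $(\mathbf{A}_{11})^{4}=\mathbf{J}-\mathbf{I}$ over $\mathbb{F}_{2}$, whence the null space is the span of $\mathbf{J}$'s rows and has dimension $1$; the ranks of $\mathbf{A}_{12},\mathbf{A}_{21},\mathbf{A}_{22}$ are then quoted from \cite{3}. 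You instead prove a reduction valid for \emph{every} $n$: the double-counting identity $\mathbf{A}_{22}\tau=(n-1)\bigl(\sum_{R}\bar v_{R}\bigr)\mathbf{1}$ together with $\ker_{\mathbb{F}_2}\mathbf{A}=0$ or $\langle\mathbf{1}\rangle$ shows that part $2$ of the conjecture implies part $1$, with the lower bound $\dim\ker\mathbf{A}_{11}\ge 1$ for even $n$ coming from the odd order $q^{n}$ of the alternating matrix $\mathbf{A}_{11}$ (or from the explicit kernel vector); your direct verifications that $\mathbf{A}_{22}=I$ for $n\le 2$ and that the row space of $\mathbf{A}_{12}$ is the even-weight code on the conic then close out $n=1,2$. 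I checked the counting steps (parities of $q^{n-1}+\cdots+1$ and $q^{n-2}+\cdots+1$, evenness of the number of anisotropic points on a non-tangent hyperplane for $n$ even, injectivity of $v\mapsto\tau$ using that the quadric is nonempty whenever the reduction is invoked) and they are sound. What the paper's approach buys is the explicit polynomial identity $(\mathbf{A}_{11})^{5}=\mathbf{A}_{11}$, which pins down the null space as the row space of $\mathbf{J}$; what yours buys is the structural insight, absent from the paper, that parts $1(a)$ and $1(b)$ are consequences of part $2$ for all $n$, so that the open content of the conjecture really resides in parts $2$ and $3$ for $n\ge 3$ --- which, as you say, neither argument reaches.
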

The conjecture in the case of $n=1$ and $2$ will be confirmed in Section 2 and 3, which is described as the following theorem. When $n \geq 3$, we can not give proof. However, by use of Magma, the results for some fixed prime power $q$ can be used to verify the reasonability of this conjecture.
\begin{theorem}For any order $q$,
\begin{enumerate}
\item in $PG(1,q)$, $\mathbf{A}_{11}$ and $\mathbf{A}_{22}$ are both full-rank;\\
\item in $PG(2,q)$, $Rank(\mathbf{A}_{11})=q^{2}-1,Rank(\mathbf{A}_{12})=Rank(\mathbf{A}_{21})=q$ and $\mathbf{A}_{22}$ is full rank.
\end{enumerate}
\end{theorem}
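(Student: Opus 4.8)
The plan is to read off all four $2$-ranks from the geometry of the polarity $\sigma\colon P\mapsto P^{\perp}$ attached to $Q$, invoking Proposition~\ref{Prop:1.1} only once, to control the corank of $\mathbf A_{11}$ in the planar case. Observe first that a column of $A$ labelled by the hyperplane $R^{\perp}$ is the indicator of $\{P:\langle P,R\rangle=0\}$, so $A_{P,R}=[\langle P,R\rangle=0]$ is symmetric in $P,R$, with $1$'s on the diagonal precisely at the isotropic points; each of $\mathbf A_{11},\mathbf A_{12},\mathbf A_{21},\mathbf A_{22}$ is then an incidence matrix between (an)isotropic points and their polar hyperplanes, and I would treat them in turn.

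In $PG(1,q)$ every hyperplane is a single point, so $\sigma$ is an involution of the $q+1$ points. Since $P$ is isotropic iff $P\in P^{\perp}$, i.e.\ iff the one-point set $P^{\perp}$ equals $P$, the fixed points of $\sigma$ are exactly the isotropic points; hence $\sigma$ permutes the anisotropic points. The row of $A$ indexed by the hyperplane $P^{\perp}$ is the indicator of the point $\sigma(P)$, so $A$ is the permutation matrix of $\sigma$. Restricting rows and columns to the $\sigma$-invariant anisotropic index set gives a square permutation matrix $\mathbf A_{11}$; restricting to the isotropic index set, on which $\sigma$ is the identity, gives $\mathbf A_{22}=I$ (the empty matrix when $\alpha$ is a non-square, so that there are no isotropic points and $\mathbf A_{11}=A$). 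Either way both blocks are full rank.

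In $PG(2,q)$ the isotropic points form a conic $\mathcal C$ with $|\mathcal C|=q+1$; the polars of its points are the $q+1$ tangent lines, the polars of the anisotropic points are the secant and passant lines, and the anisotropic points split into $\binom{q+1}{2}$ external points (each on exactly two tangents) and $\binom q2$ internal points (on no tangent). Now $\mathbf A_{22}$ records whether $R'\in\mathcal C$ lies on the tangent at $R\in\mathcal C$; a tangent meets $\mathcal C$ only at its point of contact, so $\mathbf A_{22}=I_{q+1}$ is full rank. In $\mathbf A_{21}$ (rows $=$ tangents, columns $=$ anisotropic points) the internal-point columns vanish, since a tangent contains only external points among the anisotropic ones, and the surviving part is the vertex--edge incidence matrix of the complete graph $K_{q+1}$: its vertices are the $q+1$ tangents and its edges are the external points, because two tangents meet in a unique external point and every external point is obtained so. Hence $\operatorname{rank}\mathbf A_{21}=q+1-1=q$, the incidence rank of a connected graph on $q+1$ vertices, and $\mathbf A_{12}=\mathbf A_{21}^{\top}$ has the same rank.

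The substantive step is $\operatorname{rank}\mathbf A_{11}=q^{2}-1$. Every non-tangent line carries $q-1$ or $q+1$ --- in either case an even number of --- anisotropic points, so the all-ones vector $\mathbf 1_{\mathrm{an}}$ on the anisotropic points lies in $\ker\mathbf A_{11}$; thus $\operatorname{rank}\mathbf A_{11}\le q^{2}-1$, and it suffices to prove $\dim\ker\mathbf A_{11}\le 1$. Given $v\in\ker\mathbf A_{11}$, extend it by $0$ on $\mathcal C$ to a vector $\tilde v$ on all points, so that $\tilde v$ sums to $0$ over every non-tangent line. The identity I would use is that, for any point $X$, summing $\sum_{P\in\ell}\tilde v_P$ over the $q+1$ lines $\ell$ through $X$ equals $q\,\tilde v_X+s\equiv \tilde v_X+s\pmod 2$, with $s=\sum_P\tilde v_P$. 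Applied to an internal point $X$, through which every line is non-tangent, this forces $\tilde v_X=s$ at all internal points; since $\mathbf 1_{\mathrm{an}}\in\ker\mathbf A_{11}$ has total sum $q^{2}\equiv 1$, after replacing $v$ by $v+s\,\mathbf 1_{\mathrm{an}}$ we may assume $s=0$ and $\tilde v$ supported on the external points. Applying the identity to a point $P_0$ of $\mathcal C$, whose pencil is one tangent and $q$ secants, shows the tangent-line sum vanishes too; hence $\tilde v$ sums to $0$ over every line, i.e.\ $A\tilde v=0$. By Proposition~\ref{Prop:1.1}, $\operatorname{rank}_2 A=q^{2}+q$, so $\ker A$ is one-dimensional, and since every line has the even number $q+1$ of points it is spanned by the all-ones vector $\mathbf 1$; as $\tilde v$ vanishes on the non-empty set $\mathcal C$ it is not $\mathbf 1$, so $\tilde v=0$. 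Therefore $\ker\mathbf A_{11}=\langle\mathbf 1_{\mathrm{an}}\rangle$, giving $\operatorname{rank}\mathbf A_{11}=q^{2}-1$. The one place demanding care is the planar bookkeeping --- the tangent/secant/passant and external/internal counts and the pole--polar dictionary --- but this is standard for conics in $PG(2,q)$ with $q$ odd and is insensitive to whether $\alpha$ is a square, so the case $n=2$ falls out for every admissible $\alpha$, not only the square case already available in the literature.
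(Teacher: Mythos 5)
Your proof is correct, and its treatment of $\mathbf{A}_{11}$ in $PG(2,q)$ --- the only substantive block --- follows a genuinely different route from the paper's. The $PG(1,q)$ part agrees in substance with the paper (the polarity preserves the isotropic/anisotropic partition, so both blocks are permutation matrices), and for $\mathbf{A}_{12}$, $\mathbf{A}_{21}$, $\mathbf{A}_{22}$ the paper simply quotes \cite{3}, whereas you rederive them from scratch, identifying $\mathbf{A}_{22}$ with $I_{q+1}$ and the nonzero part of $\mathbf{A}_{21}$ with the vertex--edge incidence matrix of $K_{q+1}$, whose $2$-rank is $q$. For $\mathbf{A}_{11}$ the paper proves the matrix identity $(\mathbf{A}_{11})^{4}=\mathbf{J}-\mathbf{I}$ over $\mathbb{F}_{2}$ (Lemma~\ref{Le:3.4}) by splitting $\mathbf{A}_{11}$ into external/internal and secant/passant blocks, computing eight block products, and counting $|N_{E}(P_{i})\cap N_{E}(P_{j})|$ case by case according to the type of the line $P_{i}P_{j}$ and to $q\bmod 4$; the null space is then read off from $(\mathbf{A}_{11})^{4}+\mathbf{I}=\mathbf{J}$. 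You instead bound the kernel directly: the all-ones vector on the anisotropic points is a kernel vector because secants and passants carry $q-1$ resp.\ $q+1$ anisotropic points, and any kernel vector, extended by zero on the conic and normalized to total sum zero, is forced by the pencil identity $\sum_{\ell\ni X}\sum_{P\in\ell}\tilde{v}_{P}\equiv \tilde{v}_{X}+s\pmod 2$ first to vanish at the internal points and then to annihilate the tangent lines as well, hence to lie in the one-dimensional kernel of the full matrix $A$ supplied by Proposition~\ref{Prop:1.1}; that kernel is spanned by the all-ones vector, which does not vanish on the conic, so the vector is zero. Your route buys economy and uniformity: no case distinction on $q\bmod 4$, no dependence on \cite{3}, and manifest independence of the choice of $\alpha$ (so the reduction lemma opening Section~3 is not needed); the price is that you do not obtain the extra structural statement $(\mathbf{A}_{11})^{5}=\mathbf{A}_{11}$ that the paper's identity yields.
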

To get the size of each sub-matrix of $A$ easily, from \cite{1}, we first give a general formula for the number of isotropic points:
\begin{lemma}\label{Le:1.8}
For any projective geometry $PG(n,q)$,
\begin{enumerate}
\item when $n$ is odd, the number of isotropic points is
  \begin{enumerate}[$(a)$]
  \item $q^{n-1}+q^{n-2}+\cdots+q+1+q^{\frac{n-1}{2}}$ when $\alpha$ is a nonzero square element of $\mathbb{F}_{q}$;\\
  \item $q^{n-1}+q^{n-2}+\cdots+q+1-q^{\frac{n-1}{2}}$ when $\alpha$ is a non-square element of $\mathbb{F}_{q}$.
  \end{enumerate}
\item when $n$ is even, the number of isotropic points is $q^{n-1}+q^{n-2}+\cdots+q+1$.
\end{enumerate}
\end{lemma}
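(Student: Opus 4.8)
The plan is to count the affine zeros of $Q$ and then pass to projective points. Since $Q$ is homogeneous, its nonzero affine zeros in $\mathbb{F}_q^{n+1}$ split into classes of size $q-1$, each class being exactly one projective isotropic point of $PG(n,q)$, while the zero vector accounts for the remaining solution; thus, writing $I_n$ for the number of isotropic points and $N_n$ for the number of affine zeros, $N_n = 1 + (q-1)I_n$, and it suffices to determine $N_n$.

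The key structural observation is that the first two terms of $Q$ form a hyperbolic pair. Write $Q = (x_0^2 - x_1^2) + Q'$ with $Q'(x_2,\dots,x_n) = x_2^2 - x_3^2 + \cdots + (-1)^n\alpha x_n^2$; after the index shift $y_{j-2}=x_j$ one checks that $Q'$ is literally the form of the same shape in $n-1$ variables, i.e. $Q' = Q_{n-2}$ in the obvious notation (the coefficient of $y_i$ is $(-1)^i$ for $i<n-2$ and $(-1)^n\alpha=(-1)^{n-2}\alpha$ for $i=n-2$). Now $\#\{(x_0,x_1)\in\mathbb{F}_q^2 : x_0^2 - x_1^2 = b\}$ equals $2q-1$ for $b = 0$ and $q-1$ for $b \neq 0$ (substitute $u = x_0+x_1$, $v = x_0-x_1$ and count solutions of $uv = b$). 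Convolving the solution counts of $x_0^2-x_1^2$ and of $Q_{n-2}$ over the common value gives
\[
N_n = (2q-1)\,N_{n-2} + (q-1)\bigl(q^{n-1} - N_{n-2}\bigr) = q\,N_{n-2} + (q-1)q^{n-1},
\]
which, via $N_n = 1+(q-1)I_n$, is equivalent to the recursion $I_n = q\,I_{n-2} + q^{n-1} + 1$.

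It remains to record the base cases and iterate. For $n = 0$ the form is $x_0^2$, with no isotropic point, so $I_0 = 0$; for $n=1$ the form is $x_0^2 - \alpha x_1^2$, which has exactly two isotropic points when $\alpha$ is a nonzero square and none when $\alpha$ is a non-square, so $I_1 = 2$ or $I_1 = 0$ accordingly. Feeding these into $I_n = q I_{n-2} + q^{n-1} + 1$, a one-line induction on $n$ gives $I_n = q^{n-1} + \cdots + q + 1$ when $n$ is even (the recursion only ever reaches $I_0$, so there is no dependence on $\alpha$), and $I_n = (q^{n-1} + \cdots + q + 1) \pm q^{(n-1)/2}$ when $n$ is odd, with the sign equal to the one occurring in $I_1$. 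I do not anticipate a genuine obstacle: the only care needed is the bookkeeping in the index shift identifying $Q'$ with $Q_{n-2}$ and the elementary solution count for $x_0^2 - x_1^2 = b$. Alternatively one could quote the classical point counts for nondegenerate parabolic, hyperbolic, and elliptic quadrics in $PG(n,q)$ (as in \cite{1}) after verifying that the geometric type of $Q$ for odd $n$ is hyperbolic or elliptic according as $\alpha$ is or is not a square; the recursion above has the advantage of being self-contained and of isolating the $\alpha$-dependence in the single base case $n=1$.
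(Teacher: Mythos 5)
Your argument is correct: the relation $N_n=1+(q-1)I_n$, the identification of the shifted form $Q'$ with the $(n-2)$-variable form of the same shape (the sign bookkeeping $(-1)^{i+2}=(-1)^i$ and $(-1)^n\alpha=(-1)^{n-2}\alpha$ checks out), the count $2q-1$ versus $q-1$ for $x_0^2-x_1^2=b$, and the resulting recursion $I_n=qI_{n-2}+q^{n-1}+1$ with base cases $I_0=0$ and $I_1=2$ or $0$ all hold, and the induction closes in both parities. The paper itself gives no proof of this lemma: it simply invokes the classical solution-count for quadratic forms over finite fields from the cited reference (Ireland--Rosen), where the result is usually obtained via Gauss/Jacobi sum computations or by quoting the standard point counts for parabolic, hyperbolic and elliptic quadrics. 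Your route is therefore genuinely different in character: it is an elementary, self-contained ``split off a hyperbolic plane'' recursion that isolates the entire dependence on $\alpha$ in the single base case $n=1$, at the cost of a little index bookkeeping; the citation approach is shorter on the page but imports the machinery of character sums or the classification of quadrics. Either is acceptable here, and your derivation would arguably strengthen the paper by making the lemma self-contained.
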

$\mathbf{Acknowledgement}$: This research is motivated by a talk
given by Qing Xiang at SJTU.

\section{\small{THE RANK OF SUB-MATRICES IN THE CASE OF $PG(1,q)$}}
As we did in Section 1, we have a partition of the full incidence matrix $A$ of $PG(1,q)$. In the following, we will give the 2-rank of each sub-matrix under this partition. Since from Lemma \ref{Le:1.8}, when $\alpha$ is a non-square element of $\mathbb{F}_{q}$, the number of isotropic points is zero. So $\mathbf{A}_{11} = \mathbf{A}$ (the full incidence matrix of $PG(1,q)$), which is full rank by Proposition \ref{Prop:1.1}, then we only need to consider the other cases.
\begin{proposition}
As in $(\ref{eq:2})$, we can get four sub-matrices of the incidence matrix $\mathbf{A}$ of $PG(1,q)$. Under the quadratic form $Q$ with $\alpha$ a nonzero square element of $\mathbb{F}_{q}$, $\mathbf{A}_{11}$ and $\mathbf{A}_{22}$ are both full-rank.

\end{proposition}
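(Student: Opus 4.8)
The plan is to exploit how small $PG(1,q)$ is. With $n=1$ the quadratic form is $Q(X)=x_{0}^{2}-\alpha x_{1}^{2}$ and the associated bilinear form is $\langle X,Y\rangle=x_{0}y_{0}-\alpha x_{1}y_{1}$; writing $\alpha=\beta^{2}$, the equation $Q(1,t)=0$ gives $t=\pm\beta^{-1}$, so the isotropic points are exactly $(1,\beta^{-1})$ and $(1,-\beta^{-1})$. Thus (in agreement with Lemma \ref{Le:1.8}) there are $2$ isotropic points and $q-1$ anisotropic points, so $\mathbf{A}_{22}$ has order $2$ and $\mathbf{A}_{11}$ has order $q-1$. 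The key structural fact is that, since the form is nondegenerate on the $2$-dimensional space, every $P^{\perp}$ is again $1$-dimensional and $(P^{\perp})^{\perp}=P$, so $P\mapsto P^{\perp}$ is an involution on the set of $q+1$ points of $PG(1,q)$, which in this dimension is also the set of hyperplanes.

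Next I would pin down how this involution meets the partition. A point $P$ is fixed by $\perp$ exactly when $P\subseteq P^{\perp}$, i.e.\ $\langle P,P\rangle=Q(P)=0$, i.e.\ $P$ is isotropic; and for such $P$ one has $P^{\perp}=\langle P\rangle$ because $P^{\perp}$ is $1$-dimensional and contains $P$. Hence $\perp$ fixes both isotropic points and restricts to a \emph{fixed-point-free} involution of the set of $q-1$ anisotropic points; in particular $\perp$ sends anisotropic points to anisotropic points, which is what makes $\mathbf{A}_{11}$ square and indexed as claimed.

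For $\mathbf{A}_{22}$: since each isotropic point $P_{i}$ satisfies $P_{i}\in P_{i}^{\perp}$, the diagonal entries are $1$; the off-diagonal entry records whether $\langle(1,\beta^{-1}),(1,-\beta^{-1})\rangle=0$, and this inner product equals $1+\beta^{2}\beta^{-2}=2\neq0$ since $q$ is odd (equivalently: a hyperbolic plane has only two isotropic directions, and a nondegenerate form cannot vanish on both vectors of a spanning pair). Therefore $\mathbf{A}_{22}$ is the $2\times2$ identity matrix, hence full rank.

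For $\mathbf{A}_{11}$: the entry in the row indexed by the hyperplane $P^{\perp}$ and the column indexed by the anisotropic point $R$ equals $1$ iff $R\in P^{\perp}$; but $P^{\perp}$ is a single projective point, so $R\in P^{\perp}$ forces $R=P^{\perp}$. Thus every row of $\mathbf{A}_{11}$ contains exactly one $1$ (at column $P^{\perp}$), and by the fixed-point-free involution from the second step every column contains exactly one $1$ as well, so $\mathbf{A}_{11}$ is precisely the permutation matrix of the involution $P\mapsto P^{\perp}$ on the $q-1$ anisotropic points. A permutation matrix is nonsingular over any field, so its $2$-rank is $q-1$ and $\mathbf{A}_{11}$ is full rank. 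I do not expect a genuine obstacle here; the only point requiring care is the claim that $\perp$ preserves the set of anisotropic points (so that $\mathbf{A}_{11}$ has no zero rows), and that is immediate once one observes that the fixed points of $\perp$ are exactly the isotropic points.
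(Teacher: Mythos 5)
Your proof is correct and follows essentially the same route as the paper: the key observation in both is that the polarity $P\mapsto P^{\perp}$ preserves the isotropic/anisotropic partition (equivalently, its fixed points are exactly the isotropic points), so that in $PG(1,q)$, where hyperplanes are single points, the blocks $\mathbf{A}_{11}$ and $\mathbf{A}_{22}$ are permutation matrices. You merely make explicit what the paper dismisses with ``it is easy to see,'' namely that $\mathbf{A}_{22}$ is the $2\times 2$ identity and $\mathbf{A}_{11}$ is the permutation matrix of a fixed-point-free involution on the $q-1$ anisotropic points.
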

\begin{proof}
We claim that if $P=[p_{0}:~p_{1}]$ is a nonzero point of $PG(1,q)$,  then $P^{\perp}$ is a nonzero point. Otherwise, if $P^{\perp}=R=[r_{0}:~r_{1}]$ is a zero point, then $p_{0}=\pm p_{1}$. In can be found that $P$ is a zero point by the definition, which is a contradiction. Similarly, we can prove that if $P$ is a zero point of $PG(1,q)$,  then $P^{\perp}$ is also a zero point. It is easy to see that $\mathbf{A}_{11}$ and $\mathbf{A}_{22}$ are both full-rank.
\end{proof}
To sum up, we have that Conjecture \ref{Con:1.7} is true when $n=1$.
\section{\small{THE RANK OF SUB-MATRICES IN THE CASE OF $PG(2,q)$}}
In $PG(2,q)$, a hyperplane is in fact a line. As in \cite{4}, we consider the problem under the quadratic form $Q'(X)=\alpha x_{1}^{2}-x_{0}x_{2}$ which is equivalent to $Q(X)=x_{0}^{2}-x_{1}^{2}+\alpha x_{2}^{2}$, where $\alpha$ is a nonzero element of $\mathbb{F}_{q}$. In $PG(2,q)$, there is a very important property which is described as follows:
\begin{lemma}
The rank of any sub-matrix constructed  from the partition of $\mathbf{A}$ is independent of $\alpha$.
\end{lemma}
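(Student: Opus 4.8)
The plan is to show that replacing $\alpha$ by any other nonzero $\beta$ changes the incidence matrix only by a simultaneous permutation of its rows and its columns, one which respects the partition $(\ref{eq:2})$; since such a permutation preserves the $2$-rank of each block, the ranks in question do not depend on $\alpha$. Write $\mathbf{A}^{(\alpha)}$ for the incidence matrix built from the bilinear form of $Q'_{\alpha}(X)=\alpha x_{1}^{2}-x_{0}x_{2}$, with rows indexed by a fixed list $P_{1},\dots,P_{N}$ of the points of $PG(2,q)$ and column $j$ indexed by the hyperplane orthogonal to $P_{j}$, so that the $(i,j)$ entry is $1$ precisely when $\langle P_{i},P_{j}\rangle_{\alpha}=0$.

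First I would normalise the form. If $\alpha=t^{2}$ is a square then $Q'_{\alpha}(X)=Q'_{1}(TX)$ with $T=\mathrm{diag}(1,t,1)\in GL_{3}(\mathbb{F}_{q})$. If $\alpha$ is a non-square then $\alpha\,Q'_{\alpha}(X)=(\alpha x_{1})^{2}-(\alpha x_{0})x_{2}=Q'_{1}(TX)$ with $T=\mathrm{diag}(\alpha,\alpha,1)$, i.e. $Q'_{\alpha}(X)=\alpha^{-1}Q'_{1}(TX)$; the point is that a nonzero scalar multiple of the hyperbolic term $-x_{0}x_{2}$ is again of that shape. Composing these two observations, for any $\alpha,\beta\in\mathbb{F}_{q}^{*}$ there are $c\in\mathbb{F}_{q}^{*}$ and $T\in GL_{3}(\mathbb{F}_{q})$ with $Q'_{\beta}(X)=c\,Q'_{\alpha}(TX)$ for all $X$.

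Next I would transport the incidence structure through $T$. From $Q'_{\beta}(X)=c\,Q'_{\alpha}(TX)$ the associated bilinear forms satisfy $\langle X,Y\rangle_{\beta}=c\,\langle TX,TY\rangle_{\alpha}$. Since $T$ is invertible it induces a bijection $\tau$ on the point set, and for all points $P,R$ one has $\langle P,R\rangle_{\beta}=0\Longleftrightarrow\langle\tau(P),\tau(R)\rangle_{\alpha}=0$, so $\tau$ carries the hyperplane $R^{\perp_{\beta}}$ to $\tau(R)^{\perp_{\alpha}}$ and preserves incidence; hence $\mathbf{A}^{(\beta)}=P_{\pi}^{\top}\mathbf{A}^{(\alpha)}P_{\pi}$, where $P_{\pi}$ is the permutation matrix of the index permutation realising $\tau$. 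Because $c\neq0$, the identity $Q'_{\beta}(P)=c\,Q'_{\alpha}(\tau(P))$ shows that $P$ is isotropic for $Q'_{\beta}$ iff $\tau(P)$ is isotropic for $Q'_{\alpha}$, and likewise for anisotropic points; thus $\pi$ permutes the isotropic indices among themselves and the anisotropic indices among themselves, i.e. $P_{\pi}$ is block-diagonal for $(\ref{eq:2})$. Therefore each block $\mathbf{A}^{(\beta)}_{kl}$ is obtained from $\mathbf{A}^{(\alpha)}_{kl}$ by permuting its rows and its columns, so has the same rank over $\mathbb{F}_{2}$, which is the assertion.

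The only delicate point, and the one I would flag as the mild obstacle, is the non-square case of the normalisation: a non-square $\alpha$ genuinely yields a ternary form inequivalent to $Q'_{1}$ (their discriminants lie in different square classes), so the scalar $c$ cannot be avoided; it does no harm precisely because the partition $(\ref{eq:2})$ separates merely isotropic from anisotropic points, a property insensitive to multiplying the form by a nonzero scalar. (For the finer nine-block partition the non-square scaling would instead interchange the square- and non-square-anisotropic classes, so there the statement would have to be read up to that interchange; this is presumably why the present discussion is carried out with the coarser partition $(\ref{eq:2})$.)
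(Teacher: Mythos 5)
Your proof is correct, and it is considerably more substantial than what the paper actually offers: the paper disposes of this lemma in one sentence, asserting that it ``can be easily proved by the incidence relationship between various types of points and lines and the number of them,'' i.e.\ by appealing to the tables of incidence parameters (number of points of each type on lines of each type). Those parameters alone do not determine the $2$-rank of a block --- two incidence structures with identical parameters need not have permutation-equivalent incidence matrices --- so what is really needed is an isomorphism of the labelled incidence structures, which is exactly what you construct: the identity $Q'_{\beta}(X)=c\,Q'_{\alpha}(TX)$ (equivalently, the projective equivalence of all nondegenerate conics in $PG(2,q)$, up to a scalar on the form) yields a collineation $\tau$ that preserves perpendicularity and the isotropic/anisotropic dichotomy, hence conjugates $\mathbf{A}^{(\alpha)}$ to $\mathbf{A}^{(\beta)}$ by a block-diagonal permutation. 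Your handling of the non-square case is also right and is the one delicate point: the scalar $c$ is unavoidable because the discriminant classes differ, it is harmless for the coarse partition $(\ref{eq:2})$, and for the nine-block partition it would interchange $\mathbf{\dot{A}}_{11}$-type and $\mathbf{\dot{A}}_{22}$-type labels --- consistent with the paper's Propositions 1.2--1.4, where those ranks genuinely differ. In short, your argument is a valid proof where the paper gives only a plausibility claim; the only thing the paper's viewpoint ``buys'' is brevity.
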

This lemma can be easily proved by the incidence relationship between various types of points and lines and the number of them.
So to avoid duplication, in the case of $PG(2,q)$, we will always assume that $\alpha$ is a nonzero square, in other words, we only consider the problem under the quadratic form  $Q'(X)=x_{1}^{2}-x_{0}x_{2}$.

 Following the notations in \cite{4}, we refer the {\it external}, {\it internal}  and {\it absolute}  points in this section to square anisotropic, non-square anisotropic and isotropic points; their corresponding lines are called {\it secant, passant} and {\it tangent} lines. Let $E$ denote the set of all external points. To prove the proposition following, we first introduce a lemma which can be obtained by simple counting and you can see \cite{2} for more details and related results.
\begin{lemma}
[\cite{2} p.178] We have the following tables:
\begin{table}[htbp]
\caption{Number of points on lines of various types}
\centering
\begin{tabular}{cccc}
 \toprule
 Name&Absolute points& External points& Internal points\\
 \midrule
 Tangent lines&$1$&$q$&0\\
 Secant lines&$2$&$\frac{q-1}{2}$&$\frac{q-1}{2}$\\
 Passant lines&$0$&$\frac{q+1}{2}$&$\frac{q+1}{2}$\\
 \bottomrule
\end{tabular}
 \end{table}
 \begin{table}[htbp]
\caption{Number of lines through points of various types}
\centering
\begin{tabular}{cccc}
 \toprule
 Name&Tangent lines&Secant lines &Passant lines \\
 \midrule
 Absolute points&$1$&$q$&0\\
 External points&$2$&$\frac{q-1}{2}$&$\frac{q-1}{2}$\\
 Internal points&$0$&$\frac{q+1}{2}$&$\frac{q+1}{2}$\\
 \bottomrule
\end{tabular}
 \end{table}
 \end{lemma}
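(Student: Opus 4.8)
The plan is to deduce both tables from three elementary facts about the conic $\mathcal{C}=\{P\in PG(2,q): Q'(P)=0\}$ of absolute points, using throughout that $q$ is odd. These facts are: (i) $|\mathcal{C}|=q+1$; (ii) through each absolute point $R$ there is a unique tangent line, namely the polar $R^{\perp}$, so each of the remaining $q$ lines on $R$ meets $\mathcal{C}$ in exactly one further point and is thus a secant --- this already yields the ``Absolute points'' row of Table~2; and (iii) for $P\notin\mathcal{C}$ the number of tangent lines through $P$ equals $|P^{\perp}\cap\mathcal{C}|$, which is $0$ or $2$, because $P^{\perp}$ cannot itself be tangent (the pole of a tangent line lies on $\mathcal{C}$, but $P\notin\mathcal{C}$) and a line meets $\mathcal{C}$ in at most two points. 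Since, by definition, external points are those whose polar is a secant and internal points those whose polar is a passant, (iii) reads: $P$ external $\Longleftrightarrow$ $P$ lies on exactly two tangents; $P$ internal $\Longleftrightarrow$ $P$ lies on no tangent.

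Granting (iii), I would finish Table~2 with two counting relations at an arbitrary $P\notin\mathcal{C}$. Let $t,s,p$ be the numbers of tangent, secant, passant lines through $P$. Then $t+s+p=q+1$ (all lines through a point of $PG(2,q)$), and $t+2s=q+1$ (summing over the $q+1$ absolute points, each lying on a unique line through $P$, a tangent being counted once and a secant twice). Subtracting gives $s=p$; putting $t=2$ for external points gives $s=p=\frac{q-1}{2}$, and $t=0$ for internal points gives $s=p=\frac{q+1}{2}$, as claimed.

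Table~1 I would treat dually, line by line. For a tangent line at $R$: its $q$ points other than $R$ lie off $\mathcal{C}$ and on at least one tangent, hence on exactly two, hence are external; this is the first row. For a line $\ell$ which is a secant or a passant, set $k=|\ell\cap\mathcal{C}|$ (so $k=2$ or $0$) and count the flags $(P,m)$ with $P\in\ell\setminus\mathcal{C}$, $m$ a tangent, $P\in m$: fixing a tangent $m$, the point $m\cap\ell$ lies off $\mathcal{C}$ unless $m$ is the tangent at one of the $k$ conic points of $\ell$, so there are $q+1-k$ admissible $m$; fixing $P$ instead, it contributes $2$ if external and $0$ if internal. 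Writing $e,i$ for the numbers of external and internal points on $\ell$, this gives $2e=q+1-k$ and $e+i=q+1-k$, hence $e=i=\frac{q+1-k}{2}$, which is $\frac{q-1}{2},\frac{q-1}{2}$ for a secant and $\frac{q+1}{2},\frac{q+1}{2}$ for a passant.

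The only point that needs genuine care, and the only place where the hypothesis that $q$ is odd is used, is fact (iii): that a point off the conic lies on $0$ or $2$ tangents. Everything after that is bookkeeping with the two linear relations, and I would close with the consistency checks that the column sums of Table~1 recover the global counts $q+1$, $\frac{q(q+1)}{2}$ and $\frac{q(q-1)}{2}$ of absolute, external and internal points, and that each row of Table~2 sums to $q+1$.
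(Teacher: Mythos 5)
The paper does not actually prove this lemma---it is quoted from Hirschfeld with the remark that it ``can be obtained by simple counting''---so your self-contained derivation is necessarily a different route, and it is the right kind of argument: the two linear relations $t+s+p=q+1$ and $t+2s=q+1$ at a point off the conic, the flag count over pairs $(P,m)$ on a fixed non-tangent line, and the observation that every non-conic point of a tangent line lies on at least one (hence exactly two) tangents, are exactly the ``simple counting'' the paper alludes to, and all of these steps check out, including the key fact (iii) that a point off $\mathcal{C}$ lies on $0$ or $2$ tangents.

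The one genuine gap is definitional, and it matters for whether the tables are correct as labelled. You take ``external'' to mean ``polar is a secant,'' equivalently ``lies on exactly two tangents.'' The paper instead defines external and internal points as the square anisotropic and non-square anisotropic points, i.e.\ by the quadratic character of $Q'(P)$, and then defines secant/passant lines as the polars of external/internal points. With the paper's definitions, the assertion ``a secant line contains exactly $2$ absolute points'' (equivalently, ``a square anisotropic point lies on exactly $2$ tangents'') is a substantive claim that your argument never establishes; if the correspondence went the other way, the external and internal rows of both tables would have to be swapped. The fix is a one-line discriminant computation: for $Q'(X)=x_1^2-x_0x_2$ the associated form is $\langle X,Y\rangle=x_1y_1-\tfrac12(x_0y_2+x_2y_0)$, so for $P=(p_0,p_1,p_2)$ with $p_0\neq 0$ the conic points $(1,t,t^2)$ on $P^{\perp}$ correspond to the roots of $p_0t^2-2p_1t+p_2=0$, whose discriminant is $4(p_1^2-p_0p_2)=4Q'(P)$; hence $|P^{\perp}\cap\mathcal{C}|$ is $2$ or $0$ according as $Q'(P)$ is a nonzero square or a non-square (the remaining case $p_0=0$, $p_1\neq 0$ gives $Q'(P)=p_1^2$ a square and the two conic points $(0,0,1)$ and $(1,\,p_2/2p_1,\,\ast)$ on $P^{\perp}$, so it is consistent). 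Inserting this identification, your fact (iii) and both tables follow with the paper's labelling.
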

 In order to state the proof of Lemma \ref{Le:3.4} explicitly, we give a definition.
\begin{definition} Let $P \in E$. We define
$N_{E}(P)=\{Q \in E | Q \in l, l \in Se_{P}\cup Pa_{P}\}\backslash\{P\}$, where $Se_{P}$ $($ respectively, $Pa_{P}$$)$ denotes the set of secant $($respectively,  passant$)$ lines through $P$. That is $N_{E}(P)$ is the set of external points on the secant or passant lines through $P$ with $P$  excluded.
\end{definition}
Using the software package {\it Magma}\cite{7} we are able to determine the rank of $\mathbf{A}_{11}$ with various orders $q$ $($Table $1)$.
\begin{table}[htbp]
\caption{Rank of $\mathbf{A}_{11}$}
\centering
\begin{tabular}{lccccc}
 \hline
 q& 3&5 &7& 9& 11\\
 \hline
 Dim & 8&24 &48& 80& 120\\
 \hline
\end{tabular}
\end{table}

\noindent According to this data, we have the following proposition:
\begin{proposition}\label{Pro:3.3}
The $2$-rank of $\mathbf{A}_{11}$ in the case of $PG(2,q)$ is $q^{2}-1$.
\end{proposition}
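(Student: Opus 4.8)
The plan is to compute $\dim_{\mathbb F_{2}}\ker\mathbf A_{11}$ directly and show it equals $1$. Under the pole--polar correspondence, $\mathbf A_{11}$ becomes the principal submatrix of the (symmetric) polarity matrix of $PG(2,q)$ indexed by the $q^{2}$ anisotropic points: its $(P,R)$-entry is $1$ iff $\langle P,R\rangle=0$. In particular $\mathbf A_{11}$ is symmetric with zero diagonal (an anisotropic $P$ is not on $P^{\perp}$), hence it is alternating over $\mathbb F_{2}$, has even rank, and since its order $q^{2}$ is odd we get $Rank(\mathbf A_{11})\le q^{2}-1$ for free; so it suffices to prove $\ker\mathbf A_{11}$ is one-dimensional. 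A vector in $\ker\mathbf A_{11}$ is the indicator of a set $T$ of anisotropic points with $|T\cap\ell|$ even for every secant line $\ell$ and every passant line $\ell$, because the polars of the anisotropic points are exactly the secant and passant lines. Both $T=\varnothing$ and $T=\mathcal A$ (the full set of anisotropic points) qualify — a secant, resp. passant, meets $\mathcal A$ in $q-1$, resp. $q+1$, points, both even — so I must show these are the only solutions.

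The key step is: \emph{if $T$ meets every secant and passant evenly, then $\epsilon(z):=|T\cap t_{z}|\bmod 2$, the parity of the intersection of $T$ with the tangent $t_{z}$ at a conic point $z$, does not depend on $z$.} To see this, fix $z$ and let $x_{1},\dots,x_{q}$ be the external points lying on $t_{z}$; by Table 1 these are all the non-conic points of $t_{z}$. By Table 2, through each $x_{i}$ there pass, besides $t_{z}$, exactly one further tangent $t'_{i}$ (say tangent at $z'_{i}\neq z$) together with $\tfrac{q-1}{2}$ secants and $\tfrac{q-1}{2}$ passants, and as $i$ varies $z'_{1},\dots,z'_{q}$ runs over $\mathcal C\setminus\{z\}$. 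Partitioning the points off $t_{z}$ by the pencil of lines through $x_{i}$ other than $t_{z}$ gives $|T|-|T\cap t_{z}|=\sum_{\ell\ni x_{i},\,\ell\neq t_{z}}|T\cap(\ell\setminus\{x_{i}\})|$; modulo $2$ the $q-1$ secant/passant terms each equal $[x_{i}\in T]$ and so cancel in total ($q$ is odd), while the $t'_{i}$ term is $\epsilon(z'_{i})+[x_{i}\in T]$. Summing over $i$ and using $\sum_{i}[x_{i}\in T]=|T\cap t_{z}|$ collapses everything to $|T|\equiv\sum_{w\in\mathcal C\setminus\{z\}}\epsilon(w)\pmod 2$, i.e. $\epsilon(z)\equiv\big(\sum_{w\in\mathcal C}\epsilon(w)\big)-|T|\pmod 2$, which does not involve $z$.

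It remains to rule out $T\notin\{\varnothing,\mathcal A\}$. If $\epsilon\equiv 0$, then $T$ meets every line of $PG(2,q)$ evenly, so its indicator lies in the kernel of the incidence matrix of $PG(2,q)$; by Proposition \ref{Prop:1.1} that kernel has dimension $1$ and contains the all-ones vector, so $T$ is empty or the whole point set, the latter being impossible since $T\subseteq\mathcal A$ omits the conic — hence $T=\varnothing$. If $\epsilon\equiv 1$, apply the same to $T\cup\mathcal C$: it meets every tangent evenly ($|T\cap t_{z}|+1$), every secant evenly ($+2$) and every passant evenly ($+0$), so $T\cup\mathcal C$ is empty or the whole point set; since it contains the conic it is the whole point set, whence $T=\mathcal A$. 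Therefore $\ker\mathbf A_{11}=\{0,\mathbf 1_{\mathcal A}\}$ and $Rank(\mathbf A_{11})=q^{2}-1$.

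The only real obstacle is the parity identity of the second paragraph; it rests entirely on the incidence counts in Tables 1--2 (how many points of each kind lie on a line of each kind, and conversely) together with the elementary fact that the tangents ``opposite'' to a fixed tangent sweep out every remaining point of the conic. Everything after that reduces to Proposition \ref{Prop:1.1} and mod-$2$ bookkeeping; the $\le q^{2}-1$ bound is, alternatively, just the observation that $\mathbf 1_{\mathcal A}\in\ker\mathbf A_{11}$.
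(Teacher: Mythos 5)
Your proof is correct, but it takes a genuinely different route from the paper's. The paper establishes Proposition \ref{Pro:3.3} through Lemma \ref{Le:3.4}: it splits $\mathbf{A}_{11}$ into four blocks according to external/internal points and secant/passant lines, computes every block of $(\mathbf{A}_{11})^{2}$ and then of $(\mathbf{A}_{11})^{4}$ over $\mathbb{F}_{2}$ from the incidence counts of Tables 1--2 (with a case split on $q\bmod 4$), arrives at $(\mathbf{A}_{11})^{4}=\mathbf{J}-\mathbf{I}$, and concludes that the null space equals the column space of $(\mathbf{A}_{11})^{4}+\mathbf{I}=\mathbf{J}$, which is one-dimensional. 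You instead characterize the kernel combinatorially as the indicator vectors of sets $T$ of anisotropic points meeting every secant and every passant evenly, prove by a pencil-of-lines parity argument that such a $T$ meets all tangents with a common parity, and then reduce to the known nullity of the full incidence matrix (Proposition \ref{Prop:1.1}), adjoining the conic to $T$ when that common parity is odd. I checked the key step: the $q$ non-conic points of a tangent $t_{z}$ are all external, each lies on exactly one further tangent, and these second tangents sweep out $\mathcal{C}\setminus\{z\}$ bijectively, so the mod-$2$ bookkeeping goes through. Both arguments ultimately rest on the same incidence counts, but yours avoids the block-matrix computation, treats $q\equiv 1$ and $q\equiv 3\pmod 4$ uniformly, and recycles Proposition \ref{Prop:1.1} instead of re-deriving a rank statement; the paper's computation, in exchange, produces the structural identities $(\mathbf{A}_{11})^{4}=\mathbf{J}-\mathbf{I}$ and $(\mathbf{A}_{11})^{5}=\mathbf{A}_{11}$ as byproducts. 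Your observation that $\mathbf{A}_{11}$ is alternating of odd order, giving $Rank(\mathbf{A}_{11})\le q^{2}-1$ for free, is a nice touch, though it is subsumed by the exact kernel computation.
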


\begin{lemma}\label{Le:3.4}
If we view $\mathbf{A}_{11}$ as a matrix over $F$, then $(\mathbf{A}_{11})^{4}= \mathbf{J}-\mathbf{I}$, where $\mathbf{J}$ is the all-one square matrix and $\mathbf{I}$ is the identity matrix.
\end{lemma}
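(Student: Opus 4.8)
The plan is to compute $\mathbf{A}_{11}^{2}$ in closed form and then square the result, all computations being over $\mathbb{F}_{2}$. Using the polarity $X\mapsto X^{\perp}$ I identify each row of $\mathbf{A}_{11}$, which is the polar line of an anisotropic point, with that point itself, so that $\mathbf{A}_{11}$ becomes the symmetric $q^{2}\times q^{2}$ matrix whose $(P,R)$-entry is $1$ precisely when $\langle P,R\rangle=0$. The only inputs needed are the incidence counts in Tables~1 and~2 together with two standard facts about the conic: the pole of a secant (resp.\ passant) line is an external (resp.\ internal) point, and any two distinct tangent lines meet in an external point. With this identification $\mathbf{A}_{11}^{2}[P,T]$ is the number, reduced modulo $2$, of anisotropic points $R$ with $\langle P,R\rangle=\langle R,T\rangle=0$, i.e.\ of anisotropic points in $P^{\perp}\cap T^{\perp}$. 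If $P\neq T$ these two polar lines meet in the single point that is the pole of the line $PT$, which is anisotropic exactly when $PT$ is not tangent; if $P=T$ the set in question is the whole line $P^{\perp}$, and by Table~1 the number of anisotropic points on a secant (resp.\ passant) line is $q-1$ (resp.\ $q+1$), hence even. Thus $\mathbf{A}_{11}^{2}=\mathbf{M}$, where $\mathbf{M}$ is the symmetric $\{0,1\}$-matrix with $\mathbf{M}[P,T]=1$ iff $P\neq T$ and $PT$ is not a tangent line, and the task reduces to proving $\mathbf{M}^{2}=\mathbf{J}-\mathbf{I}$.

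For the diagonal, $\mathbf{M}^{2}[P,P]$ is, modulo $2$, the number of anisotropic $T\neq P$ with $PT$ not tangent; by Tables~1 and~2 there are $2(q-1)$ anisotropic $T\neq P$ with $PT$ tangent when $P$ is external and none when $P$ is internal, so this count is $q^{2}-1$ minus an even number, hence $0$. Now fix $P\neq U$. Then $\mathbf{M}^{2}[P,U]$ counts the anisotropic $T\notin\{P,U\}$ for which $PT$ and $TU$ are both non-tangent, so by inclusion--exclusion it equals $(q^{2}-2)-a_{P}-a_{U}+b_{P,U}$, where $a_{P}$ (resp.\ $a_{U}$) is the number of anisotropic $T\notin\{P,U\}$ with $PT$ (resp.\ $TU$) tangent and $b_{P,U}$ is the number of those with both tangent. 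From Tables~1 and~2 one computes $a_{P}=2(q-1)-\varepsilon$ when $P$ is external and $a_{P}=0$ when $P$ is internal, where $\varepsilon=1$ if $PU$ is a tangent line and $\varepsilon=0$ otherwise; in either case $a_{P}\equiv\varepsilon\pmod 2$, and likewise $a_{U}\equiv\varepsilon$, so $a_{P}+a_{U}\equiv 0$.

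It therefore suffices to show $b_{P,U}$ is even. If $P$ or $U$ is internal, $b_{P,U}=0$. If both are external and $PU$ is not tangent, then no tangent through $P$ equals a tangent through $U$, and the four points obtained by intersecting a tangent through $P$ with a tangent through $U$ are — using that two distinct lines meet in one point and that two distinct tangents meet at an external point — pairwise distinct anisotropic points, none equal to $P$ or $U$; hence $b_{P,U}=4$. If both are external and $PU$ is tangent, then the relevant $T$ are the anisotropic points of the tangent $PU$ other than $P$ and $U$, of which there are $q-2$, together with the single intersection of the remaining tangent through $P$ with the remaining tangent through $U$, giving $b_{P,U}=(q-2)+1=q-1$, which is even since $q$ is odd. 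In every case $b_{P,U}$ is even, so $\mathbf{M}^{2}[P,U]\equiv q^{2}-2\equiv 1\pmod 2$ for $P\neq U$. Hence $\mathbf{M}^{2}=\mathbf{J}-\mathbf{I}$ and $\mathbf{A}_{11}^{4}=\mathbf{M}^{2}=\mathbf{J}-\mathbf{I}$. I expect the main obstacle to be the evaluation of $b_{P,U}$: the degenerate configuration in which $PU$ is itself a tangent must be handled separately, and in the generic configuration one must check that the four candidate intersection points are genuinely distinct and anisotropic (not on the conic); everything else is parity bookkeeping with the incidence numbers of Tables~1 and~2.
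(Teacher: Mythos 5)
Your proof is correct, and it follows the same two--step strategy as the paper's (compute $\mathbf{A}_{11}^{2}$ combinatorially, then square the result with parity counts drawn from the two incidence tables), but the execution is genuinely different and noticeably cleaner. The paper splits $\mathbf{A}_{11}$ into four blocks according to external/internal points and secant/passant lines, describes $\mathbf{A}_{11}^{2}$ blockwise (the external--external block via the sets $N_{E}(P)$, the mixed blocks as all--one matrices, the internal--internal block as $\mathbf{J}-\mathbf{I}$), and then evaluates the eight block products with a case distinction on $q\bmod 4$, the two cases recombining at the end to give $\mathbf{J}-\mathbf{I}$. You instead use the polarity to identify rows with points once and for all, observe that $\mathbf{A}_{11}^{2}$ is the single symmetric matrix $\mathbf{M}$ with $\mathbf{M}[P,T]=1$ iff $P\neq T$ and the line $PT$ is not tangent (which is exactly the union of the paper's four blocks, since a line joining two anisotropic points is non-tangent precisely when at least one is internal or the external--external pair is ``$N_E$-related''), and then square $\mathbf{M}$ by inclusion--exclusion. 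This buys you a uniform treatment of external and internal points and eliminates the $q\equiv 1$ versus $q\equiv 3\pmod 4$ dichotomy entirely; the price is that you must carry out the geometric verification that $b_{P,U}$ is even, i.e.\ the check that the four tangent--tangent intersection points are distinct, external, and distinct from $P$ and $U$ in the generic case, and the separate count $q-1$ when $PU$ is itself a tangent --- which you do correctly. The two standard facts you invoke (the pole of a non-tangent line is anisotropic, and two distinct tangents meet in an external point) are consistent with the tables the paper cites, so no gap remains.
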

\begin{proof} Firstly, we divide $\mathbf{A}_{11}$ into 4 sub-matrices:
\begin{equation}
\left( \begin{array}{cc}
\mathbf{\dot{A}}_{11}& \mathbf{\dot{A}}_{12}\\
\mathbf{\dot{A}}_{21}& \mathbf{\dot{A}}_{22}\\
\end{array} \right),
\end{equation}
where the columns of $\mathbf{\dot{A}}_{11}$ and $\mathbf{\dot{A}}_{12}$ are labeled by the external and internal points respectively, and the rows of $\mathbf{\dot{A}}_{11}$ and $\mathbf{\dot{A}}_{21}$ are labeled by secant and passant lines. From the table above, there exists no internal points on tangent line, i.e., any internal point is on some secant line or some passant line. Then
\[(\mathbf{A}_{11})^2=\left( \begin{array}{cc}
\mathbf{M}_{1}& \mathbf{M}_{2}\\
\mathbf{M}_{3}&\mathbf{M}_{4}\\
\end{array} \right),\]
where $M_{2}$ ( respectively, $M_{3}$ ) is the $\frac{q(q+1)}{2} \times \frac{q(q-1)}{2}$ ( respectively, $\frac{q(q-1)}{2} \times \frac{q(q+1)}{2}$) all-one matrix and $M_{4}$ is a square matrix of order $\frac{q(q-1)}{2}$ whose diagonal entries are $0$ and the other are $1$; the row of $M_{1}$ indexed by the point $P_{i}$ can be viewed as the characteristic vector of $N_{E}(P_{i})$ with respect to $E$.
Moreover, by the multiplication of the block matrix,
\begin{equation}\label{Eq:4.5}
(\mathbf{A}_{11})^4=(\mathbf{A}_{11})^2 \cdot (\mathbf{A}_{11})^2=
\left( \begin{array}{cc}
(\mathbf{M}_{1})^{2}+\mathbf{M}_{2}\cdot\mathbf{M}_{3}& \mathbf{M}_{1}\cdot\mathbf{M}_{2}+\mathbf{M}_{2}\cdot\mathbf{M}_{4}\\
\mathbf{M}_{3}\cdot\mathbf{M}_{1}+\mathbf{M}_{4}\cdot\mathbf{M}_{3}&\mathbf{M}_{3}\cdot\mathbf{M}_{2}+(\mathbf{M}_{4})^{2}\\
\end{array} \right).
\end{equation}
Next we will calculate each part of the block matrix by the relationship between the points and the lines in $PG(2,q)$.

The entry in the $i^{th}$ row and $j^{th}$ column of $(\mathbf{M}_{1})^{2}$ is given by $|N_{E}(P_{i})\cap N_{E}(P_{j})|~ (mod ~2)$. When $i=j$, we consider the complementary set $N_{E}(P_{i})^{c}$ of $N_{E}(P_{i})$ with respect to $E$, whose cardinality is turned out to be $2(q-1)+1$. So the parity of $|N_{E}(P_{i})|$ is the same as the parity of $|E|-2(q-1)-1= \frac{q(q+1)}{2}-1$. When $i \neq j$, we consider the complementary set $\big(N_{E}(P_{i})\cap N_{E}(P_{j})\big)^{c}$ of $N_{E}(P_{i})\cap N_{E}(P_{j})$ with respect to $E$.

\noindent Case $\mathbf{I}$. The line through $P_{i}$ and $P_{j}$ is a tangent line to $\mathcal {O}$. In this case, $|\big(N_{E}(P_{i})\cap N_{E}(P_{j})\big)^{c}|=3(q-2)+3=3(q-1)$, which is even.

\noindent Case $\mathbf{II}$. The line through $P_{i}$ and $P_{j}$ is not a tangent line to $\mathcal {O}$. In this case, $|\big(N_{E}(P_{i})\cap N_{E}(P_{j})\big)^{c}|=4(q-3)+6 $, which is also even.

So the parity of $|N_{E}(P_{i})\cap N_{E}(P_{j})|$ is the same as that of $|E|$. From the results calculated above, we have \[(\mathbf{M}_{1})^{2}=\begin{cases}
\mathbf{J}-\mathbf{I},& if~q \equiv 1~(mod~4),\\
\mathbf{I},& if~q \equiv 3~(mod~4),
\end{cases}\]
The other matrices which can be easily calculated are as follows:
\begin{displaymath}
\mathbf{M}_{2}\cdot\mathbf{M}_{3}=\begin{cases}
\mathbf{0},& if~q \equiv 1~(mod~4),\\
\mathbf{J},& if~q \equiv 3~(mod~4),
\end{cases}
\mathbf{M}_{1}\cdot\mathbf{M}_{2}=\begin{cases}
\mathbf{0},& if~q \equiv 1~(mod~4),\\
\mathbf{M}_{2},& if~q \equiv 3~(mod~4),
\end{cases}
\end{displaymath}
\begin{displaymath}
\mathbf{M}_{2}\cdot\mathbf{M}_{4}=\begin{cases}
\mathbf{M}_{2},& if~q \equiv 1~(mod~4),\\
\mathbf{0},& if~q \equiv 3~(mod~4),
\end{cases}
\mathbf{M}_{3}\cdot\mathbf{M}_{1}=\begin{cases}
\mathbf{0},& if~q \equiv 1~(mod~4),\\
\mathbf{M}_{3},& if~q \equiv 3~(mod~4),
\end{cases}
\end{displaymath}
\begin{displaymath}
\mathbf{M}_{4}\cdot\mathbf{M}_{3}=\begin{cases}
\mathbf{M}_{3},& if~q \equiv 1~(mod~4),\\
\mathbf{0},& if~q \equiv 3~(mod~4),
\end{cases}
\mathbf{M}_{3}\cdot\mathbf{M}_{2}=\begin{cases}
\mathbf{J},& if~q \equiv 1~(mod~4),\\
\mathbf{0},& if~q \equiv 3~(mod~4),
\end{cases}
\end{displaymath}

\[\mathbf{M}_{4}^{2}=\begin{cases}
\mathbf{I},& if~q \equiv 1~(mod~4),\\
\mathbf{J}-\mathbf{I},& if~q \equiv 3~(mod~4).
\end{cases}\]
Then the proposition can be proved by substituting the results above for the block in equation (\ref{Eq:4.5}).
\end{proof}
\begin{corollary}
If we view $\mathbf{A}_{11}$ as a matrix over $F$, then $(\mathbf{A}_{11})^{5}= \mathbf{A}_{11}$.
\end{corollary}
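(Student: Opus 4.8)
The plan is to read this off directly from Lemma~\ref{Le:3.4}. Since $(\mathbf{A}_{11})^{4}=\mathbf{J}-\mathbf{I}$ over $F$, multiplying both sides on the left by $\mathbf{A}_{11}$ gives
\[
(\mathbf{A}_{11})^{5}=\mathbf{A}_{11}\cdot(\mathbf{A}_{11})^{4}=\mathbf{A}_{11}(\mathbf{J}-\mathbf{I})=\mathbf{A}_{11}\mathbf{J}-\mathbf{A}_{11}.
\]
So the entire statement reduces to the single claim that $\mathbf{A}_{11}\mathbf{J}=\mathbf{0}$ over $F$, combined with the fact that $-\mathbf{A}_{11}=\mathbf{A}_{11}$ in characteristic $2$.

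To prove $\mathbf{A}_{11}\mathbf{J}=\mathbf{0}$, I would observe that each entry of $\mathbf{A}_{11}\mathbf{J}$ in a given row equals, modulo $2$, the row sum of the corresponding row of $\mathbf{A}_{11}$, i.e. the number of anisotropic points lying on the corresponding line. The rows of $\mathbf{A}_{11}$ are indexed by secant and passant lines, so I invoke the incidence tables used in the proof of Lemma~\ref{Le:3.4}: a secant line carries $\tfrac{q-1}{2}$ external and $\tfrac{q-1}{2}$ internal points, hence $q-1$ anisotropic points in total, while a passant line carries $\tfrac{q+1}{2}+\tfrac{q+1}{2}=q+1$ anisotropic points. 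Since $q$ is odd, both $q-1$ and $q+1$ are even, so every row of $\mathbf{A}_{11}$ has even weight and $\mathbf{A}_{11}\mathbf{J}=\mathbf{0}$ over $F$. (One could equally multiply on the right and use the column sums, i.e. the number of non-tangent lines through a fixed anisotropic point, which is again $q-1$ or $q+1$ by the second table.)

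Combining the two facts yields $(\mathbf{A}_{11})^{5}=\mathbf{0}-\mathbf{A}_{11}=\mathbf{A}_{11}$ over $F$, as claimed. I do not anticipate any real obstacle: the only substantive ingredient is Lemma~\ref{Le:3.4}, which is already available, and the parity computation is immediate from the incidence tables. If desired, one could add the remark that this identity pins down the minimal polynomial of $\mathbf{A}_{11}$ over $F$ as a divisor of $x^{5}-x$, which over a field of characteristic $2$ factors as $x(x+1)^{4}$, so the spectral behaviour of $\mathbf{A}_{11}$ is constrained to the eigenvalues $0$ and $1$; but this is not needed for the corollary itself.
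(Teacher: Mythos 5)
Your proof is correct and is precisely the intended derivation: the paper states this corollary without proof as an immediate consequence of Lemma~\ref{Le:3.4}, and your reduction to $\mathbf{A}_{11}\mathbf{J}=\mathbf{0}$ via the even row sums ($q-1$ anisotropic points on a secant line, $q+1$ on a passant line) is exactly the right parity argument. Nothing is missing.
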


\noindent $\mathbf{Proof}$ $\mathbf{of}$ $\mathbf{Proposition}$ \ref{Pro:3.3}.
Using the same method in \cite{3}, we have that the $F$-null space of $\mathbf{A}_{11}$ is equal to the span of the rows of $(\mathbf{A}_{11})^{4}+ \mathbf{I}$, where $\mathbf{I}$ is the identity matrix. By Lemma \ref{Le:3.4}, $(\mathbf{A}_{11})^{4}+ \mathbf{I}= \mathbf{J}$. Hence, the dimension of the $F$-null space of $\mathbf{A}_{11}$ is $1$, then the proposition is proved.

\noindent In addition to this, by the detailed information supplied by \cite{3}, we can have that $Rank(\mathbf{A}_{12})=Rank(\mathbf{A}_{21})=q$ and $\mathbf{A}_{22}$ is full-rank by easy calculation. Until now, together with Section 2, we can say that Conjecture \ref{Con:1.7} is true when $n=1, 2$.

\section{\small{APPENDICES}}
In this section, we only list the $Magma$ commands in the case of $PG(3,q)$
 under the quadratic form with $\alpha$ a non-square element in $\mathbb{F}_{q}$. Besides, we give a example for how to get the 2-rank of $\mathbf{A}_{11}$. The commands
 under the quadratic form with $\alpha$ a nonzero square element in $\mathbb{F}_{q}$ can be simply obtained by deleting $\alpha$. Moveover, you can get the 2-rank of any submatrix of the full incidence matrix in the case of $PG(n,q)$ by simply modifying the parameters in the command statement.
\lstset{basicstyle=\footnotesize, frame=shadowbox,xleftmargin=1.5em,xrightmargin=1.5em,aboveskip=1em, belowskip=1em}

\noindent {\it To create the set of points of $PG(3,q)$:}
\begin{lstlisting}
K := GF(q);
a :=PrimitiveElement(K);
vs := VectorSpace(K, 4);
pt := Setseq({Normalize(v): v in vs | v ne vs!0});
\end{lstlisting}
{\it To create the various types of points:}
\begin{lstlisting}
pt1 := Setseq({v: v in pt| IsSquare(v[1]^2-v[2]^2+v[3]^2
    -a*v[4]^2)and (v[1]^2-v[2]^2+v[3]^2-a*v[4]^2 ne 0)});
pt2 := Setseq({v: v in pt| not IsSquare(v[1]^2-v[2]^2
    +v[3]^2-a*v[4]^2)});
pt0 := Setseq({v: v in pt|v[1]^2-v[2]^2+v[3]^2-a*v[4]^2 eq 0});
pt~0 := Setseq({v: v in pt| v[1]^2-v[2]^2+v[3]^2-a*v[4]^2 ne 0});
\end{lstlisting}
{\it To create the various types of hyperplanes:}

\begin{lstlisting}
Plan1 := {({w : w in pt | w[1]*pt1[i][1]-w[2]*pt1[i][2]
      +w[3]*pt1[i][3]-a*w[4]*pt1[i][4] eq 0}): i in {1..#pt1}};
Plan2 := {({w : w in pt | w[1]*pt2[i][1]-w[2]*pt2[i][2]
      +w[3]*pt2[i][3]-a*w[4]*pt2[i][4] eq 0}): i in {1..#pt2}};
Plan0 :={({w : w in pt | w[1]*pt0[i][1]-w[2]*pt0[i][2]
      +w[3]*pt0[i][3]-a*w[4]*pt0[i][4] eq 0}): i in {1..#pt0}};
Plan~0 := {({w : w in pt | w[1]*pt~0[i][1]-w[2]*pt~0[i][2]
       +w[3]*pt~0[i][3]-a*w[4]*pt~0[i][4]eq 0}):i in {1..#pt~0}};
\end{lstlisting}
{\it To get the 2-rank of $\mathbf{A}_{11}$:}
\begin{lstlisting}
B :={ {i : i in {1..#pt1}| pt1[i] in Pl}: Pl in Plan1};
D := IncidenceStructure<#P|B>;
M := ChangeRing(IncidenceMatrix(D), GF(2));
Rank(M);
\end{lstlisting}

\end{document}